\documentclass[11pt,reqno]{amsart}
\usepackage{amssymb,amsmath}
\usepackage{amsmath,amstext,amssymb,amscd}
\usepackage{verbatim}
\usepackage{enumerate}
\usepackage{mathrsfs}
\usepackage[dvipsnames,usenames]{xcolor}

\usepackage{amsthm}

\usepackage{color}
\usepackage{setspace}

\oddsidemargin=-0.0cm
\evensidemargin=-0.0cm
\textwidth=16cm
\textheight=23cm \topmargin=-.3cm
\raggedbottom
%\usepackage{showkeys}

%%%%%%%%%%%%%%%%%%%%%%%%%%%%%%%%%%%%%%%%%%%%%%%%%%%%%%%%%%%%%%%%%%%%%%
% DEFS
\newcommand{\dist}{{\rm dist}}

\def\Bbb{\mathbb}
\def\Cal{\mathcal}
\def\Dt{\partial_t}

\def\eb{\varepsilon}

\def\R {\mathbb{R}}
\def\N {\mathbb{N}}

\def\<{\left<}
\def\>{\right>}

\def\Nx{\nabla_x}
\def\Dx{\Delta_x}
\def\({\left(}
\def\){\right)}

%%%%%%%%%%%%%%%%%%%%%%%%%%%%%%%%%%%%%%%%%%%%%%%%%%%%%%%%%%%%%%%%%%%%%%

%%%%%%%%%%%%%%%%%%%%%%%%%%%%%%%%%%%%%%%%%%%%%%%%%%%%%%%%%%%%%%%%%%%%%%
\newtheorem{proposition}{Proposition}[section]
\newtheorem{theorem}[proposition]{Theorem}
\newtheorem{corollary}[proposition]{Corollary}
\newtheorem{lemma}[proposition]{Lemma}

\theoremstyle{definition}
\newtheorem{definition}[proposition]{Definition}

\newtheorem{remark}[proposition]{Remark}

\numberwithin{equation}{section}
%%%%%%%%%%%%%%%%%%%%%%%%%%%%%%%%%%%%%%%%%%%%%%%%%%%%%%%%%%%%%%%%%%%%%%

%%%%%%%%%%%%%%%%%%%%%%%%%%%%%%%%%%%%%%%%%%%%%%%%%%%%%%%%%%%%%%%%%%%%%%
% BIBLIOGRAPHY

\def \no#1#2#3 {{\bf #1} (#3), #2.}
%\no{Vol}{Pag}{Year}
\def \eds#1#2#3 {#1, #2, #3.}
%\eds{Pub}{City}{Year}
%%%%%%%%%%%%%%%%%%%%%%%%%%%%%%%%%%%%%%%%%%%%%%%%%%%%%%%%%%%%%%%%%%%%%%

\title[]
{Kolmogorov $\eb$-entropy of the uniform attractor for a wave equation}
%{Constructions of inertial manifolds in the abstract scheme in 3D and their applications}
\author[Y. Xiong and C. Sun] {Yangmin Xiong and Chunyou Sun${}^\ast$\\
\tiny School of Mathematics and Statistics, Lanzhou University \\ Lanzhou, 730000, P.R. China}
%\tiny xiongym18@lzu.edu.cn (Y.Xiong), sunchy@lzu.edu.cn (C.Sun)}
%\address[] {\small School of Mathematics and Statistics, Lanzhou University, Lanzhou, 730000, P.R. China}
%\email[]{xiongym18@lzu.edu.cn, sunchy@lzu.edu.cn}
\begin{document}
\begin{abstract}
This paper is concerned with a non-autonomous sup-cubic semilinear wave equation in a smooth bounded domain of $\mathbb R^{3}$, using the introduced weak topology entropy, we obtain an upper bound for the $\varepsilon$-entropy of the
uniform attractor for the case where the external forces %are translation bounded in some regular space but
are not translation-compact. %may not be translation compact in $L_{loc}^{2}(\mathbb{R};L^{2}(\Omega))$.

{\bf Keywords} Kolmogorov $\eb$-entropy; Uniform attractor; Non-translation compact; Wave equation; Strichartz estimate
\end{abstract}

%\address
%{\small School of Mathematics and Statistics, Lanzhou University,
%Lanzhou, 730000, P.R. China }
%\vspace{0.5cm}
%\email {\small xiongym18@lzu.edu.cn (Y.Xiong), sunchy@lzu.edu.cn (C.Sun)}

%\address{${}^2$ Imperial College, London SW7 2AZ, United Kingdom.}
%\email{a.kostianko@imperial.ac.uk (A.Kostianko)}
%\address{${}^3$ University of Surrey, Department of Mathematics, Guildford, GU2 7XH, United Kingdom.}
%\email{s.zelik@surrey.ac.uk (S.Zelik)}

%\address{${}^4$ Keldysh Institute of Applied Mathematics, Moscow, Russia}

%\keywords{Kolmogorov $\eb$-entropy; Uniform attractor; Non-translation compact; Wave equation; Strichartz estimate}
\subjclass[2000]{35B40, 35B45, 35L70}
\thanks{$^*$ Corresponding author}
\thanks{Email address: xiongym18@lzu.edu.cn (Y.Xiong), sunchy@lzu.edu.cn (C.Sun)}
\thanks{This work was supported by the NSFC (Grant No. 12271227).}
%\thanks{This work is partially supported by the grant 19-71-30004 of RSF (Russia),
  %the Leverhulme grant No. RPG-2021-072 (United Kingdom) and NSFC grant No. 11522109 and 11871169 (China).}
%\date{May 25, 2020}
\maketitle

\section{Introduction}\label{sec1}
We consider the following non-autonomous damped wave equation
\begin{equation}\label{eq1.1}
\begin{cases}
\partial_{t}^{2}u+\gamma\partial_{t}u-\Delta u+f(u)=g(x,t),~~x\in\Omega,~~t\geq\tau,\\
u\big|_{\partial\Omega}=0,\ \ \ (u,\partial_{t}u)\big|_{t=\tau}=(u_{\tau},u'_{\tau}),
\end{cases}
\end{equation}
where $\Omega$ is a bounded domain in
$\mathbb{R}^{3}$ with smooth boundary,
$\gamma>0$ is a fixed dissipation rate, $\Delta$ is the Laplacian with respect to variable $x$. The nonlinearity $f\in C^{1}(\mathbb{R})$ has a sup-cubic growth rate, i.e.
\begin{equation}\label{eq1.2}
|f'(s)|\leq C(1+|s|^{p-1}),~~1\leq p<5,~~\forall~s\in\mathbb{R},
\end{equation}
and satisfies the %natural
dissipative assumptions:
\begin{equation}\label{eq1.3}
f(s)s\geq-C, \ \ s\in\mathbb R,
\end{equation}
and there exists $\alpha>0$ such that
\begin{equation}\label{eq1.4}
F(s)\le \alpha f(s)s+C,
\end{equation}
where~$F(s)=\int_{0}^{s}f(r)dr$. The function $g(\cdot,s)$ is the given external force which is assumed translation bounded in $L_{loc}^{2}(\mathbb{R};L^{2}(\Omega))$, i.e.
\begin{equation}\label{eq1.5}
\|g\|_{L^2_b(\R;L^{2})}:=\sup_{t\in\R}\|g\|_{L^2(t,t+1;L^{2})}<\infty.
\end{equation}
The initial data $\xi_{u}(\tau)=(u_{\tau},u'_{\tau}):=\xi_{\tau}$ is taken from the standard energy space $\mathcal{E}:=H_{0}^{1}(\Omega)\times L^{2}(\Omega)$ with energy norm
\begin{equation*}
\|\xi_{u}\|_{\mathcal{E}}^{2}:=\|(u,\partial_{t}u)\|_{\mathcal{E}}^{2}=\|\nabla u\|^{2}+\|\partial_{t}u\|^{2},
\end{equation*}
here $\|\cdot\|$ is the usual $L^{2}(\Omega)$-norm.

The long-time behavior of the autonomous systems can be described by global attractors, which have been extensively studied in \cite{BV92,CV02,Chueshov15,Hale87,T97}, and this attractor usually has finite fractal and Hausdorff dimension.

The study becomes more complicated for the non-autonomous equations. One of the main methods is so-called uniform attractor, which originated from the work of Haraux \cite{Haraux88,Haraux91} and further developed by Chepyzhov and Vishik \cite{C13,CV94,CV02}.
By means of constructing skew product flow one
can reduce the problem to an autonomous one in the extended phase space and keep the invariance, but need the hull has some compactness.

The Hausdorff and fractal dimension of the uniform attractor may be infinite \cite{CV94,CV02,MZ08}.
A possible way to measure the "thickness" of infinite dimensional attractors,
which has been suggested in \cite{CV02},
is to estimate their
Kolmogorov~$\varepsilon$-entropy. %For the case \eqref{eq1.1} with sub-cubic nonlinear term,
Based on the results of the existence and structure of uniform attractor $\Cal{A}_{\Sigma}$, and under the assumptions that the symbol space $\Sigma$ is compact in the Hausdorff topological space $\Xi$ endowed with the local uniform convergence topology, the family of processes $\{U_{h}(t,\tau),h\in\Sigma\}$ satisfies the Lipschitz condition with respect to symbols and is uniformly quasidifferentiable, the authors in \cite{CV02} proved that the upper estimate of $\varepsilon$-entropy of uniform attractor has the following form
\begin{equation*}
\mathbb{H}_{\varepsilon}(\Cal{A}_{\Sigma};\mathcal{E})\leq D\log_{2}\frac{\eb_0}{\varepsilon}%+\mathbb{H}_{\varepsilon_{0}}(\mathscr{A}_{\Sigma};\mathcal{E})
 +\mathbb{H}_{\lambda\varepsilon}(\Sigma_{0,L\log_{2}\frac{\eb_0}{\varepsilon}};\Xi_{0,L\log_{2}\frac{\eb_0}{\varepsilon}}), \ \ \forall~\varepsilon\leq\varepsilon_{0}
\end{equation*}
provided that the behavior of entropy of $\Pi_{0,l}\Sigma=\Sigma_{0,l}$ in $\Xi_{0,l}$ is known. Here the positive constants $D$, $\lambda$, $L$ and $\varepsilon_{0}$ depend only on the parameters of equation and the topological space $\Xi$ where the symbol space $\Sigma$ live in, but are independent of $\eb$.

With the progress on the wave equations obtained in recent years, there are some results show that one can still obtain the existence and structure of the strong uniform attractor for the system with more general external forces which are not translation compact, see e.g. \cite{MZ07,MS19,SZ20,Zelik15}. Meanwhile, based on the recent extension of the Strichartz type estimates for the bounded domains in \cite{BSS09,BLP08}, the attractor theory has been developed for semilinear wave equations concerning sup-cubic nonlinearities in both autonomous and non-autonomous cases, see \cite{KSZ16,MSSZ21,MS19,SZ20}.

Based on the above existing results, it is natural to study the Kolmogorov $\varepsilon$-entropy of the uniform attractors for equation \eqref{eq1.1} with sup-cubic nonlinearity and more general external forces. Since for the quintic wave equations, it is still not clear the behavior of the energy-to-Strichartz estimate as $t\rightarrow\infty$ (crucial for the attractor theory) in bounded domain with Dirichlet boundary conditions,
so we just consider the sub-quintic case in this paper. And in particular, we consider the following two typical cases of extra regularity for $g$:
\begin{equation}\label{gt-reg}
g\in H_{b}^{1}(\mathbb{R};L^{2}(\Omega))
\end{equation}
or
\begin{equation}\label{gs-reg}
g\in L_{b}^{2}(\mathbb{R};H^{1}(\Omega)), %\ \ \text{for some} \ \ \alpha\in(0,1),
\end{equation}
which belong to the so-called time regular or space regular functions in $L_{b}^{2}(\mathbb{R};L^{2}(\Omega))$ respectively, see \cite{Zelik15}.

%However, to our knowledge, the known results on the estimation of $\varepsilon$-entropy for the damped wave models only considered the nonlinearity of sub-cubic growth rate in the autonomous case or the external forces need to be translation compact in the non-autonomous case, see \cite{CV,CV1,Z1}.

% And in particular, we consider the following two typical cases of extra regularity for $g$:
%\begin{equation}\label{eq1.6}
%g\in H_{b}^{1}(\mathbb{R};L^{2}(\Omega))
%\end{equation}
%or
%\begin{equation}\label{eq1.7}
%g\in L_{b}^{2}(\mathbb{R};H^{r}(\Omega)) \ \ \text{for some} \ \ r\in(0,1),
%\end{equation}
%which belong to the so-called time regular or space regular functions in $L_{b}^{2}(\mathbb{R};L^{2}(\Omega))$, respectively, see \cite{Ze}.
As we know, the value of the $\varepsilon$-entropy may depend on the topology chosen. In contrast to the translation compact case, it would brings some technical difficulties in our case.
We note that the Lipschitz condition with respect to symbols proposed in \cite{CV02} or the derivation of some estimates for difference between solutions \cite{EMZ03,Zelik01} is of fundamental significance for estimate the~$\varepsilon$-entropy,
however, is no longer suitable in our case, at least it hard to verify in a straightforward way. Moreover, the fact that in weak topology the distance between two symbols on each subinterval cannot be controlled by the whole interval, which leads us cannot use the method of iterated.

To this end, we introduce a new number $\mathbb{H}_{\varepsilon}^w(A,X;\,B,Y)$, the weak topology entropy of the set $B$ in $Y$ corresponding to the set $A$ in $X$, which depends only on the spaces $X$, $Y$ and the sizes of $A$ and $B$, for our application turns to depend only the bounds of symbols and the parameters of equation \eqref{eq1.1}. This new definition is based on an observation that %by using the Strichartz estimates, we derive the asymptotic smoothing property of solutions, such that
by some asymptotic smoothing property of solutions to verify that the smoothness of uniform attractors, % some asymptotic smoothing property of solutions,
then the collection of the second ingredient of the difference of solutions (with same initial data and different symbols $h_{1},h_{2}\in\Sigma$) is uniformly bounded in $X:=L^{\infty}(iT,(i+1)T;H_0^{1}(\Omega))\cap H^{1}(iT,(i+1)T;H^{-1}(\Omega))$ for all $i\in\mathbb{N}^{+}$ and for some $T>0$. Moreover, $X$ can be compactly embedded into $Y^{\ast}$, and $Y=L^{2}(iT,(i+1)T;L^{2}(\Omega))$ is the space where the symbol space $\Sigma_{[iT,(i+1)T]}$ lives in. Based on this, a special cover of $\Sigma_{[T,(l+1)T]}$, $l\in\mathbb{N}^{+}$ is defined and we obtain an upper bound of the number of this cover. Meanwhile, we construct an intermediate object based on the non-autonomous exponential attractor proposed in \cite{EMZ00,EMZ05}, which has
finite fractal dimension and satisfies uniform forward (and also pullback) exponential attracting, and the most important is that these two properties are independent of the specified choice of symbols that obtained by time translations of the initial symbol.

As an application, for the uniform attractor $\Cal{A}_{\Sigma}$ of equation \eqref{eq1.1}, under the assumptions \eqref{eq1.2}-\eqref{eq1.4} and $g$ satisfies either \eqref{gt-reg} or \eqref{gs-reg},
we obtain the following estimate of its $\varepsilon$-entropy
\begin{equation*}
\mathbb{H}_{\varepsilon}(\Cal{A}_{\Sigma};\mathcal{E})\leq D'\log_{2}\frac{1}{\varepsilon}+L'\log_{2}\frac{1}{\varepsilon}\mathbb{H}_{\lambda'\varepsilon^{\ell}}^{w}(r,X;\,\|g\|_{L_{b}^{2}(\mathbb{R};L^{2}(\Omega))},Y),
\end{equation*}
%for all $\varepsilon\leq\varepsilon_{0}$,
where the positive constants $D'$, $L'$, $\lambda'$, $\ell$ and $r$ depend only on
the spaces $X$, $Y$ and the parameters of equation \eqref{eq1.1}, but are independent of $\eb$.
We remark that the space $H_{b}^{1}(\mathbb{R};L^{2}(\Omega))$ or $L_{b}^{2}(\mathbb{R};H^{1}(\Omega))$ is much better than $L_{b}^{2}(\mathbb{R};L^2(\Omega))$,  however, a function just belongs to such more regular spaces %$H_{b}^{1}(\mathbb{R};L^{2}(\Omega))$ or $L_{b}^{2}(\mathbb{R};H_0^{1}(\Omega))$
still not good enough to deduce its hull is translation compact in $L_{loc}^{2}(\mathbb{R};L^2(\Omega))$ (e.g., see \cite{CV02,Zelik15}). Here, in this paper, we assume $g\in H_{b}^{1}(\mathbb{R};L^{2}(\Omega))$ or $L_{b}^{2}(\mathbb{R};H^{1}(\Omega))$ mainly for simplifying the calculations and presenting some examples to illustrate how to using the weak topology entropy to give an upper bound about the $\varepsilon$-entropy of the considered uniform attractor.

The rest of the paper is organized as follows. In Section 2, we give the symbol spaces and recall briefly the results about uniform attractors. In Section 3, we introduce an entropy $\mathbb{H}_{\varepsilon}^w(A,X;\,B,Y)$ with respect to the weak topology of $Y$ for the subsets-spaces pair $(A,X;\,B,Y)$. In Section 4, we verify that the uniform attractor of equation \eqref{eq1.1} is more regular if the external force $g\in L_b^{2}(\R;L^{2}(\Omega))$ is more regular. In Section 5, we derive some asymptotic smoothing estimates for difference of solutions of \eqref{eq1.1} that allows us to construct a family of discrete exponential attractor, which has uniform finite fractal dimension and enjoys uniform exponential attracting property.
%are crucial for our entropy estimations.
Finally, an upper bound of the entropy of the attractor for equation \eqref{eq1.1} is given in Section 6 by using the new defined weak topology entropy.
%%%%%%%%%%%%%%%%%%%%%%%%%%%%%%%%%%%%%%%%%%%%%%%%%%%%%%%%%%%%%%%%%%%%%%%%%%%%%%%%%%%%%%%%%%%%%%%%%%%%%%%%%%%%%
\section{Preliminaries}\label{sec2}
Let $\Xi$ be a Hausdorff topological space and let $\{T(t),t\in\R\}$ be the continuous translation operators on $\Xi$:
$$
T(t)h(s):=h(s+t), \ \ s\in\R.
$$
Let all time-dependent coefficients of the considered equations belonging to a set $\Sigma$ with topology from $\Xi$. The set $\Sigma$ is called the symbol space and $h\in\Sigma$ is the symbol.

Let us consider a typical symbol space $\Sigma$ which only contains time-dependent external
forces. Let $H$ be a reflexive Banach space and we consider functions $g:\R\to H$. Assume that $g\in L_{b}^{p}(\mathbb{R};H)$, $1<p<\infty$. Consider the following set
\begin{equation*}
\Sigma_{0}:=\{T(t)g(s);t\in\R\}:=\{g(t+s);t\in\R\}
\end{equation*}
and the closure in $\Xi$ of the set $\Sigma_0$. This closure is said to be the hull of the function $g(s)$ in $\Xi$ and is denoted by $\Cal H(g)$.

For $g\in L_b^{p}(\R;H)$, $1<p<\infty$, due to the Banach-Alaoglu theorem, the hull
\begin{equation}\label{0.hull}
\Cal H(g):=[\{T(t)g,t\in\R\}]_{L^{p,w}_{loc}(\R;H)}
\end{equation}
%$$
is compact in $L^{p,w}_{loc}(\R;H)$. We denote by $L^{p,w}_{loc}(\mathbb{R};H)$ the space $L^{p}_{loc}(\mathbb{R};H)$ endowed with local weak convergence topology, which means a sequence $h_{n}$ converges to $h$ as $n\rightarrow\infty$ in $L_{loc}^{p,w}(\mathbb{R};H)$ if and only if
\begin{equation*}
\int_{t_{1}}^{t_{2}}\langle v(s),h_{n}(s)-h(s)\rangle ds\rightarrow0\ \ \text{as}~n\rightarrow\infty
\end{equation*}
for each bounded interval $[t_{1},t_{2}]\subset\mathbb{R}$ and any $v\in L^{q}(t_{1},t_{2};H^{\ast})$ with $1/p+1/q=1$. %Here and below $[\cdot]_V$ stands for the closure in the topology of $V$ and the symbol "$w$" stands for the weak topology.
We know that $\Cal H(g)$ is compact in $L^{p,w}_{loc}(\R;H)$ and the translation group $\{T(t),t\in\R\}$ is continuous in the topology of $L_{loc}^{p,w}(\mathbb{R};H)$. It is also not difficult to see that
$$
\|h\|_{L^p_b(\R;H)}\le\|g\|_{L^p_b(\R;H)},\ \ \forall h\in\Cal H(g).
$$
Since translation boundedness is not sufficient to gain the existence of a strong uniform attractor,
the most natural and most studied is the class of translation-compact (tr.c.) external forces introduced by Vishik and Chepyzhov, see \cite{CV02}. We recall that $g\in L^p_b(\R;H)$ is translation-compact if the set $\Sigma_{0}$ is precompact in $L_{loc}^{p}(\mathbb{R};H)$.
In \cite{Zelik15}, the author proposed several more general classes of external forces which are not translation compact but can still establish the existence of strong uniform attractors. Here, we are mainly interested in the following two classes of external forces.
\begin{definition}[\cite{Zelik15}]
Let $H$ be a reflexive Banach space and $1<p<\infty$. A function $g\in L_{b}^{p}(\mathbb{R};H)$ is space regular if for every $\varepsilon>0$ there exists a finite-dimensional subspace $H_{\varepsilon}\subset H$, dim$H_{\varepsilon}<\infty$ and a function $g_{\varepsilon}\in L_{b}^{p}(\mathbb{R};H_{\varepsilon})$ such that $\|g-g_{\varepsilon}\|_{L_{b}^{p}(\mathbb{R};H)}\leq\varepsilon$. Analogously, a function $g\in L_{b}^{p}(\mathbb{R};H)$ is time regular if for every $\varepsilon>0$ there exists a function $g_{\varepsilon}\in H_{b}^{k}(\mathbb{R};H)$ for all $k>0$ such that $\|g-g_{\varepsilon}\|_{L_{b}^{p}(\mathbb{R};H)}\leq\varepsilon$.
\end{definition}

Now, let us recall some concepts and results related to the uniform attractor theory, see \cite{CV02,SZ20,Zelik15}.

Let $\mathcal E$ be a Hausdorff topological space. %and $\Sigma$ be a parameter set.
Let $\{U_h(t,\tau),h\in\Sigma\}$ be a family of dynamical processes on $\mathcal E$, that is, for each $h\in\Sigma$,  the two parameter family of operators $U_h(t,\tau)$ from $\Cal E$ to $\Cal E$ satisfy
$$
 U_h(\tau,\tau)=\operatorname{Id},\ \ U_h(t,\tau)=U_h(t,s)\circ U_h(s,\tau),\ \ t\ge s\ge\tau\in\mathbb R.
$$
%$\Sigma$ is called the symbol space and $h\in\Sigma$ is the symbol.
Also, let $\mathbb B$ be a family of sets $B\subset\mathcal E$ such that if $B\in\mathbb B$ and $B_1\subset B$, then $B_1\in\mathbb B$. The sets $B\in\mathbb B$ are said to be bounded.

\begin{definition}[\cite{CV02}]
A set $\Cal{A}_{\Sigma}\subset\Cal E$ is called a uniform attractor for the family of processes $\{U_h(t,\tau),h\in\Sigma\}$ if:

$1)$~$\Cal{A}_{\Sigma}$ is compact and bounded in $\mathcal E$;

$2)$~$\Cal{A}_{\Sigma}$ is a uniformly attracting set for the processes $\{U_h(t,\tau),h\in\Sigma\}$, that means, for every $B\in\mathbb B$ and every neighborhood $\mathcal O(\Cal{A}_{\Sigma})$, there exists a time $T=T(\mathcal O,B)$ such that
$$
\cup_{h\in\Sigma}U_h(t,\tau)B\subset\mathcal O(\Cal{A}_{\Sigma}), \ \ t-\tau\geq T, \ \tau\in\mathbb R;
$$

$3)$~$\Cal{A}_{\Sigma}$ is a minimal set with the properties $1)$ and $2)$.
\end{definition}
Below, $\mathcal E$ will usually be a Banach space or even a Hilbert space endowed with either the strong or the weak topology. The associated uniform attractor will be referred to as a strong or a weak uniform attractor respectively. In both cases $\mathbb B$ consists of all bounded sets in the Banach space under consideration.

We recall the following standard existence theorem, see \cite{CV02} for details.
\begin{theorem}\label{exist thm}
Let $\Cal E$ and $\Xi$ be Hausdorff topological spaces and let $\Sigma$ be a compact set in the space $\Xi$. Let the family of processes $\{U_h(t,\tau),h\in\Sigma\}$ acting on $\mathcal E$ possesses a uniformly absorbing set $\mathcal B\in\mathbb B$ and is uniformly asymptotically compact on $\Cal B$. Then this family of processes has a uniform attractor $\Cal{A}_{\Sigma}\subset\mathcal B$.

Assume, in addition, that the map $(\xi,h)\rightarrow(U_h(t,\tau)\xi,T(t)h)$ is continuous for every fixed $t$ and $\tau$. Then $\Cal{A}_{\Sigma}$ possesses the following description:
\begin{equation}\label{1.rep}
\Cal A_{\Sigma}:=\cup_{h\in\Sigma}\Cal K_h,
\end{equation}
%$$
where
$$
\Cal K_h:=\{u:\R\to \Cal E,\ \ U_h(t,\tau)u(\tau)=u(t),\ t\ge\tau\in\R\}
$$
is a set of all complete bounded trajectories of the process $U_h(t,\tau)$ (the so-called kernel of $U_h(t,\tau)$ in the terminology of Vishik and Chepyzhov, see \cite{CV02}).
\end{theorem}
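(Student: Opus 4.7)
The plan is to follow the classical Chepyzhov--Vishik construction: I build the uniform attractor as a uniform $\omega$-limit set, and then recover the kernel description via a skew-product semigroup on the extended phase space $\mathcal E \times \Sigma$.

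For the existence part, I set
\[
\omega_{\tau,\Sigma}(\mathcal B) := \bigcap_{s\ge 0}\overline{\bigcup_{t\ge s}\bigcup_{h\in\Sigma} U_h(t+\tau,\tau)\mathcal B}^{\,\mathcal E}
\]
and verify the three defining properties of a uniform attractor: compactness, uniform attraction of every $B\in\mathbb B$, and minimality. Compactness is read off from uniform asymptotic compactness on $\mathcal B$: any sequence of the form $U_{h_n}(t_n+\tau,\tau)b_n$ with $b_n\in\mathcal B$, $h_n\in\Sigma$, $t_n\to\infty$ has a convergent subsequence whose limit lies in $\omega_{\tau,\Sigma}(\mathcal B)$. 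Uniform attraction of an arbitrary $B\in\mathbb B$ is a standard contradiction argument: if attraction failed, one would extract a sequence remaining at positive distance from $\omega_{\tau,\Sigma}(\mathcal B)$, use the absorbing property to shift it into $\mathcal B$ after a large time, then invoke uniform asymptotic compactness to contradict the positive distance. Minimality follows because every closed uniformly attracting set has to contain all such subsequential limits, and independence of the base time $\tau$ again uses the absorbing property.

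For the kernel description, I introduce the skew-product semigroup on $\mathcal E \times \Sigma$ by
\[
S(t)(\xi,h):=\bigl(U_h(t+\tau,\tau)\xi,\ T(t)h\bigr),\qquad t\ge 0,
\]
which is a genuine semigroup by the cocycle identity for $\{U_h\}$ together with the group law for $\{T(t)\}$, and is continuous thanks to the added assumption. The set $\mathcal B\times\Sigma$ absorbs all bounded sets of $\mathcal E\times\Sigma$, and asymptotic compactness of $\{U_h\}$ combined with compactness of $\Sigma$ upgrades to asymptotic compactness of $S(t)$. Hence $S(t)$ has a global attractor $\widetilde{\mathcal A}\subset \mathcal B\times\Sigma$. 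I then verify the two identities
\[
\mathcal A_\Sigma = \Pi_{\mathcal E}\widetilde{\mathcal A}, \qquad \widetilde{\mathcal A}=\bigl\{(u(0),h):\ h\in\Sigma,\ u\in \mathcal K_h\bigr\},
\]
the first by comparing the two uniform $\omega$-limits, the second by the classical characterization of a global attractor as the union of complete bounded orbits: a full-time bounded orbit of $S$ through $(\xi,h)$ is exactly a pair $(u(0),h)$ with $u\in\mathcal K_h$. Projecting onto $\mathcal E$ yields $\mathcal A_\Sigma=\bigcup_{h\in\Sigma}\mathcal K_h$.

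The main technical obstacle is topological bookkeeping: uniform asymptotic compactness, the continuity of the joint map $(\xi,h)\mapsto(U_h(t,\tau)\xi,T(t)h)$, and the compactness of $\Sigma$ in $\Xi$ must all be used in the same topologies in which $S(t)$ is shown to have a global attractor. In the weak-topology setting motivating the rest of the paper, some care is needed because products of weak topologies and sequential versus net continuity can diverge; once the correct sequential/compact setup is fixed, the arguments above go through without further modification.
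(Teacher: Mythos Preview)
The paper does not prove this theorem at all: it is quoted as a known result with the sentence ``We recall the following standard existence theorem, see \cite{CV02} for details,'' and no argument is given. So there is no ``paper's own proof'' to compare against.

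That said, your proposal is exactly the classical Chepyzhov--Vishik argument from \cite{CV02} that the paper is citing: build the uniform $\omega$-limit set for existence, then pass to the skew-product semigroup on $\mathcal E\times\Sigma$ to obtain the kernel representation via the global-attractor-equals-bounded-complete-orbits characterization. Two small points of bookkeeping: (i) in your definition of $S(t)$ there is a stray $\tau$; the semigroup should be $S(t)(\xi,h)=(U_h(t,0)\xi,\,T(t)h)$, and the semigroup property then relies on the translation identity \eqref{trans}, which is assumed in the surrounding framework but not stated in the theorem itself; (ii) the statement gives $\mathcal K_h$ as a set of trajectories $u:\R\to\mathcal E$, whereas what enters \eqref{1.rep} is the section $\mathcal K_h(0)=\{u(0):u\in\mathcal K_h\}$, so strictly speaking the union in \eqref{1.rep} is of kernel sections, as your skew-product argument correctly produces. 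Neither point is a genuine gap in your outline.
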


We now turn to the damped wave equation \eqref{eq1.1}.
Recall some known results related with the non-autonomous system \eqref{eq1.1} and the corresponding
uniform attractors, see e.g., \cite{MS19,SZ20,Zelik15}.

We start with the following linear wave equation
\begin{equation}\label{linear}
\Dt^{2}v+\gamma\Dt v-\Delta v=G(t), \ \ \xi_v\big|_{t=\tau}=\xi_\tau, \ \ v\big|_{\partial\Omega}=0.
\end{equation}
\begin{theorem}[\cite{SZ20}]
Let the initial data $\xi_\tau\in\Cal E^{\alpha}$ and $G\in L_{loc}^{1}(\R;\Cal E^{\alpha})$ for some $\alpha\in\R$. Then there exist a unique solution $\xi_v\in C_{loc}(\R;\Cal E^{\alpha})$ of the problem \eqref{linear}. Also, the solution $v$ belongs to the space $L_{loc}^{4}(\R;H^{\alpha,12}(\Omega))$. Moreover, the following estimate holds:
\begin{multline}\label{linenergyest}
\|\xi_v(t)\|_{\Cal E^{\alpha}}+\(\int_\tau^{t}e^{-4\beta(t-s)}\|v(s)\|_{H^{\alpha,12}}^{4}\,ds\)^{1/4}\le \\ \le C\|\xi_\tau\|_{\Cal E^{\alpha}}e^{-\beta(t-\tau)}
+C\int_\tau^{t}e^{-\beta(t-s)}\|G(s)\|_{H^{\alpha}}\,ds,
\end{multline}
where the positive constants $C$ and $\beta$ are independent of $t\geq\tau$, $G$ and $\xi_\tau$.
\end{theorem}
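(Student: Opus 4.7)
The plan is to split the argument into three components: existence and uniqueness in $C_{loc}(\R;\Cal E^{\alpha})$, the exponential energy decay in $\Cal E^{\alpha}$, and the Strichartz-type integrability in $L^{4}_{loc}(\R;H^{\alpha,12})$ together with its weighted form. Since \eqref{linear} is linear and the Dirichlet Laplacian commutes with its own fractional powers on a smooth domain, I first reduce everything to the case $\alpha=0$ by applying $(-\Delta)^{\alpha/2}$ to the equation: the transformed unknown $w=(-\Delta)^{\alpha/2}v$ solves the same wave equation with forcing $(-\Delta)^{\alpha/2}G$, and the $\Cal E^{\alpha}$, $H^{\alpha}$, $H^{\alpha,12}$ norms of $v$ become the $\Cal E$, $L^{2}$, $L^{12}$ norms of $w$. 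Existence and uniqueness in $C_{loc}(\R;\Cal E)$ for $G\in L^{1}_{loc}(\R;L^{2})$ then follows by writing \eqref{linear} as a first-order system and invoking the $C_{0}$-semigroup generated by $\bigl(\begin{smallmatrix}0 & 1 \\ \Delta & -\gamma\end{smallmatrix}\bigr)$ on $\Cal E$ together with Duhamel's formula.

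For the energy-decay contribution to \eqref{linenergyest}, I would work with a modified energy
\[
\Cal E_{\delta}(v)=\tfrac{1}{2}\bigl(\|\nabla v\|^{2}+\|\Dt v\|^{2}\bigr)+\delta\langle v,\Dt v\rangle+\tfrac{\delta\gamma}{2}\|v\|^{2},
\]
with $\delta>0$ small enough that $\Cal E_{\delta}$ is equivalent to $\|\xi_{v}\|_{\Cal E}^{2}$. Differentiating along solutions of \eqref{linear} and using the Poincar\'e and Cauchy--Schwarz inequalities yields an estimate of the form $\frac{d}{dt}\Cal E_{\delta}+2\beta\Cal E_{\delta}\le C\|G\|\,\Cal E_{\delta}^{1/2}$, and Gronwall then gives $\|\xi_{v}(t)\|_{\Cal E}\le C\|\xi_{\tau}\|_{\Cal E}e^{-\beta(t-\tau)}+C\int_{\tau}^{t}e^{-\beta(t-s)}\|G(s)\|\,ds$, which is the bound for the first term on the left of \eqref{linenergyest}.

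The genuinely hard piece is the weighted Strichartz term. For this I would invoke the Strichartz estimates for the wave equation on smooth bounded three-dimensional domains with Dirichlet boundary developed in \cite{BSS09,BLP08}, which on any unit-length window $[j,j+1]$ give
\[
\|v\|_{L^{4}(j,j+1;L^{12})}\le C\bigl(\|\xi_{v}(j)\|_{\Cal E}+\|G\|_{L^{1}(j,j+1;L^{2})}\bigr).
\]
Raising this to the fourth power, multiplying by the appropriate value of $e^{-4\beta(t-s)}$ on each subinterval $[t-k-1,t-k]$, summing the resulting geometric series in $k$, and inserting the already proved exponential energy bound for $\|\xi_{v}(t-k-1)\|_{\Cal E}$ together with a Fubini interchange for the driving-force contribution produces the Strichartz summand of \eqref{linenergyest} for $\alpha=0$. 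Transferring back via $(-\Delta)^{\alpha/2}$ then gives the full statement.

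The principal obstacle is this patching step: the bounded-domain Strichartz estimate is only available as a local-in-time inequality, and weaving in the exponential weight $e^{-4\beta(t-s)}$ while maintaining constants that are uniform in $\tau$ and $t$ requires carefully tracking how the local Strichartz constants interact with the dissipative semigroup decay, in particular controlling the tail sums as $k\to\infty$ and the cross-contributions between the free evolution and the Duhamel term. A secondary delicate point is verifying that $(-\Delta)^{\alpha/2}$ genuinely commutes with the boundary condition and with each estimate used; this relies on the smoothness of $\partial\Omega$ and on identifying the $\alpha$-scaled spaces with the domains of fractional powers of the Dirichlet Laplacian.
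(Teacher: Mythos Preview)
The paper does not supply its own proof of this theorem: it is quoted verbatim from \cite{SZ20} and used as a black box (note the attribution in the theorem header and the absence of any proof environment following the statement). There is therefore nothing in the paper to compare your proposal against.

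That said, your outline is the standard route to this estimate and is essentially what one finds in \cite{SZ20} and the related literature: reduce to $\alpha=0$ by conjugating with $(-\Delta)^{\alpha/2}$, obtain the dissipative energy bound via the modified functional $\Cal E_{\delta}$, and then patch the local Strichartz inequality of \cite{BSS09,BLP08} over unit intervals, weighting by $e^{-4\beta(t-s)}$ and summing a geometric series. The two caveats you flag are the right ones. For the patching, the cleanest bookkeeping is to split $v$ on each window into the free piece launched from $\xi_v(j)$ and the Duhamel piece driven by $G$ on $[j,j+1]$; the free piece inherits the exponential decay of $\|\xi_v(j)\|_{\Cal E}$ directly, while the forcing piece contributes $\int_j^{j+1}\|G(s)\|\,ds$, and the discrete convolution of $e^{-\beta k}$ with these pieces reproduces the continuous convolution on the right of \eqref{linenergyest} up to a constant. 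For the commutation of $(-\Delta)^{\alpha/2}$ with the Strichartz estimate, one should note that the local estimate in \cite{BSS09} is in fact stated at the level of the spectrally localised building blocks, so the $H^{\alpha,12}$ version follows by the same Littlewood--Paley summation, not merely by applying $(-\Delta)^{\alpha/2}$ to the $L^{12}$ estimate (which would require a nontrivial multiplier bound on $L^{12}$).
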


The well-posedness and dissipativity of problem \eqref{eq1.1} is verified in \cite{MS19,SZ20} in the class of the so-called Shatah-Struwe (SS) solutions, see \cite{BSS09,BLP08,SS93}. We recall that $u(t)$ is a SS-solution of problem \eqref{eq1.1} if $\xi_u\in C([\tau,T],\mathcal E)$ and
%$$
\begin{equation*}\label{2.str}
u\in L^4(\tau,T; L^{12}(\Omega))\ \ \text{for all}\ \ T>\tau
\end{equation*}
%$$
and if it satisfies equation \eqref{eq1.1} in the sense of distributions.
\begin{theorem}[\cite{MS19,SZ20}]\label{posed dis}
Under the assumptions \eqref{eq1.2}-\eqref{eq1.5}, for any $\xi_{\tau}\in\mathcal{E}$ there exists a unique Shatah-Struwe solution
of problem \eqref{eq1.1} defined for all $t>\tau$. Moreover, this solution possesses the following dissipative estimate
\begin{equation}\label{eq2.2}
\|\xi_{u}(t)\|_{\mathcal{E}}+\|u\|_{L^{4}(t,t+1;L^{12}(\Omega))}\leq Q(\|\xi_{\tau}\|_{\mathcal{E}})e^{-\beta(t-\tau)}+Q(\|g\|_{L_{b}^{2}(\mathbb{R};L^{2}(\Omega))}),
\end{equation}
where the positive constant $\beta$ and the monotone increasing function $Q$ are independent of $t,\tau$, $\xi_{\tau}$ and $g$.
\end{theorem}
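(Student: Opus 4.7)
The plan is to combine Strichartz-type estimates for the linear damped wave equation with a standard energy argument. For local existence of a Shatah--Struwe solution, I would fix a short time interval $[\tau,\tau+T]$ and run a contraction mapping argument in the space $X_T := C([\tau,\tau+T];\mathcal{E}) \cap L^{4}(\tau,\tau+T;L^{12}(\Omega))$. Given $v \in X_T$, define $\Phi(v)$ to be the solution of the linear problem \eqref{linear} with forcing $G(t)=g(x,t)-f(v)$ and initial datum $\xi_\tau$. Applying the linear estimate \eqref{linenergyest} with $\alpha=0$, and using the sub-quintic growth of $f$ together with H\"older's inequality (interpolating $L^{2p}_x$ between $L^{6}_x$ and $L^{12}_x$), I would bound $\|f(v)\|_{L^{1}(\tau,\tau+T;L^{2})}$ by a quantity of the form $C T^{\theta}\,Q(\|v\|_{L^{\infty}(\tau,\tau+T;H^{1})}) (1+\|v\|_{L^{4}(\tau,\tau+T;L^{12})}^{a})$ with $a<4$ and $\theta>0$; the hypothesis $p<5$ is exactly what leaves room for such an interpolation. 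For $T$ small enough (depending only on $\|\xi_\tau\|_{\mathcal{E}}$ and $\|g\|_{L^{2}_b}$), this makes $\Phi$ a contraction on a ball of $X_T$ and yields a unique local Shatah--Struwe solution.

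Uniqueness and Lipschitz continuous dependence then follow by writing the difference $w=u_{1}-u_{2}$ of two solutions, observing that $|f(u_{1})-f(u_{2})|\le C(1+|u_{1}|^{p-1}+|u_{2}|^{p-1})|w|$, applying \eqref{linenergyest} to $w$ on short subintervals, and running a Gronwall argument; the finiteness of the Strichartz norms of $u_{1}$ and $u_{2}$ is precisely what allows the nonlinear source to be absorbed.

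For the dissipative estimate I would use the modified Lyapunov functional
\[
\mathcal L_u(t) := \tfrac12\|\nabla u\|^{2}+\tfrac12\|\partial_t u\|^{2}+\int_{\Omega} F(u)\,dx+\delta\langle u,\partial_t u\rangle + C_{0},
\]
with $\delta>0$ small enough and $C_{0}$ large enough (invoking \eqref{eq1.3}) that $\mathcal L_u(t)$ is equivalent to $\|\xi_u(t)\|_{\mathcal{E}}^{2}$. Differentiating along the equation, using \eqref{eq1.4} to dominate $\int F(u)\,dx$ by $\int f(u)u\,dx$ plus a constant, and handling the $\int g\,\partial_t u$ term via Young's inequality, produces a differential inequality $\tfrac{d}{dt}\mathcal L_u + \beta\mathcal L_u \le C\|g(t)\|^{2}$; integration then yields the energy-norm part of \eqref{eq2.2}, and global existence is obtained by gluing local solutions along this a priori bound.

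Finally, the Strichartz norm on $[t,t+1]$ is recovered by applying \eqref{linenergyest} once more on the unit interval with forcing $g-f(u)$ and inserting the already-established uniform energy bound into the same interpolation as in the fixed-point step. The main obstacle throughout is the super-cubic, sub-quintic regime $3<p<5$: here the $L^{\infty}_t H^{1}_x$ energy bound alone does not control $f(u)$ in $L^{1}_t L^{2}_x$, so one genuinely needs the bounded-domain Strichartz estimates of \cite{BSS09,BLP08} (packaged in \eqref{linenergyest}) to supply the extra integrability, and $p<5$ is the borderline condition that keeps the power of the Strichartz norm on the right-hand side strictly below $4$, allowing the fixed point to close.
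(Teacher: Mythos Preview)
The paper does not give its own proof of this theorem: it is quoted verbatim from \cite{MS19,SZ20} and used as a black box. Your sketch follows the standard route taken in those references (local well-posedness in the Strichartz space via contraction, energy--Lyapunov argument for dissipativity, then feeding the energy bound back through \eqref{linenergyest} to recover the $L^{4}_{t}L^{12}_{x}$ norm), so there is nothing to compare against in the present paper.

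One small inaccuracy in your outline: the functional $\mathcal{L}_{u}$ you introduce is \emph{not} two-sidedly equivalent to $\|\xi_{u}\|_{\mathcal{E}}^{2}$ in the super-cubic range, since $\int_{\Omega}F(u)\,dx$ can be of order $\|u\|_{H^{1}}^{p+1}$. What one actually has is the one-sided control $c\|\xi_{u}\|_{\mathcal{E}}^{2}-C\le \mathcal{L}_{u}(t)\le Q(\|\xi_{u}\|_{\mathcal{E}})$, and this asymmetry is precisely why the final dissipative estimate \eqref{eq2.2} carries a nonlinear function $Q$ of the initial energy rather than a constant multiple. The differential inequality and its integration are otherwise as you describe, so the argument goes through once this is stated correctly.
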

According to the general scheme of \cite{CV02} to construct the uniform attractor, we consider a family of equations with symbols $h\in\Sigma =\Cal H(g)$:
\begin{equation}\label{eq2.3}
\begin{cases}
\partial_{t}^{2}u+\gamma\partial_{t}u-\Delta u+f(u)=h(t),\\
u\big|_{\partial\Omega}=0,\ \ \ \xi_{u}\big|_{t=\tau}=\xi_{\tau}.
\end{cases}
\end{equation}
Problem \eqref{eq2.3} is well-posed for each $h\in\Sigma$ and then generate a family of dynamical processes $\{U_h(t,\tau),h\in\Sigma\}$ in the energy phase space $\mathcal E$ which satisfy the translation identity
\begin{equation}\label{trans}
 U_{h}(t+s,\tau+s)=U_{T(s)h}(t,\tau),\ \ t\ge\tau,\ \ \tau\in\mathbb R,\ \ s\ge0.
 \end{equation}
Moreover, estimate \eqref{eq2.2} implies that the family of processes $\{U_{h}(t,\tau),h\in\Sigma\}$ corresponding to \eqref{eq2.3} has a bounded uniformly absorbing set $\Cal B_{0}$ in $\mathcal{E}$.
%\begin{equation*}
%B_{0}=\{\xi_{u}\in\mathcal{E};\|\xi_{u}(t)\|_{\mathcal{E}}\leq\rho_{0}\},
%\end{equation*}
%where $\rho_{0}=2Q(\|g\|_{L_{b}^{2}(\mathbb{R};L^{2}(\Omega))})$, i.e, for any bounded subset $B\subset\mathcal{E}$, there exists $T_{0}=T_{0}(B)>0$ such that
%\begin{equation}\label{eq2.4}
%\cup_{h\in\mathcal H(g)}U_{h}(t+\tau,\tau)B\subset B_{0}, \ \ \ \forall~t\geq T_{0},~\tau\in\mathbb{R}.
%\end{equation}
Since $\mathcal E$ is a reflexive Banach space, the absorbing set $\Cal B_0$ is compact and metrizable in the weak topology of $\mathcal E$. %By Proposition \ref{prop2.1}, we know that $\mathcal H(g)$ is compact in $L_{loc}^{2,w}(\mathbb{R};L^{2}(\Omega))$.
 %as well as the dissipative estimate
 %\begin{equation}\label{dis}
%\|U_h(t,\tau)\xi\|_{\mathcal E}\le Q(\|\xi\|_{\mathcal E})e^{-\alpha(t-\tau)}+C_*,\ \ t\ge\tau,\ h\in\mathcal H(g),\ \ \xi\in E
%\end{equation}
%$$
%holds uniformly with respect to $h$, $t$ and $\tau$. Here $\alpha>0$ and $Q$ is monotone increasing function both independent of $\xi$, $h$, $t$ and $\tau$.
It is straightforward to verify the maps
$(\xi,h)\to U_h(t,\tau)\xi$ are weakly continuous from $\mathcal E\times\Sigma$ to $\mathcal E$ for every fixed $t$ and $\tau$.
%Then, all the asumptions of Theorem \ref{exist thm} are satisfied and we  have the following result.
%Thus, problem \eqref{eq2.3} is well-posed for any $h\in\Sigma$ and generates a family of processes $\{U_{h}(t,\tau)\},~h\in\Sigma$ on $\mathcal{E}$ as follows:
%\begin{equation*}
%U_{h}(t,\tau)\xi_{\tau}:=\xi_{u}(t),
%\end{equation*}
%where $u(t)$ is the Shatah-Struwe solution of \eqref{eq2.3} with external force $h\in\Sigma$.
%\begin{theorem}[\cite{MS19,SZ20}]
%Let the assumptions \eqref{eq1.2}-\eqref{eq1.5} hold. %and the external force $g\in L_{b}^{2}(\mathbb{R};L^{2}(\Omega))$ be time or space regular.
Then, the family of processes $\{U_{h}(t,\tau),h\in\Sigma\}$ possesses a weak uniform attractor $\Cal{A}_{\Sigma}^{w}$ which satisfies \eqref{1.rep}.
\begin{theorem}[\cite{MS19,SZ20}]\label{wunifexi}
Let the assumptions \eqref{eq1.2}-\eqref{eq1.5} hold, and in addition let the external force $g$ be space or time regular. Then there exists a uniform attractor $\Cal{A}_{\Sigma}$ for equation \eqref{eq1.1} in the strong topology of the energy space $\mathcal E$ and it coincides with the weak attractor $\Cal{A}_{\Sigma}^{w}$ constructed above.
\end{theorem}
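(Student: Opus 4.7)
The existence of the weak uniform attractor $\mathcal{A}_\Sigma^w$ together with the representation \eqref{1.rep} is already supplied by Theorem \ref{exist thm} applied to $\mathcal E$ endowed with its weak topology, since by assumption the absorbing ball $\mathcal B_0$ is weakly compact, the family $\{U_h(t,\tau),h\in\Sigma\}$ is uniformly (weakly) asymptotically compact on $\mathcal B_0$, and the map $(\xi,h)\mapsto U_h(t,\tau)\xi$ is weakly continuous. Thus the only remaining task is to verify uniform asymptotic compactness \emph{in the strong topology} of $\mathcal E$: for any sequences $t_n\to+\infty$, $h_n\in\Sigma$, $\xi_n\in\mathcal B_0$, the sequence $U_{h_n}(t_n,0)\xi_n$ admits a subsequence converging in the norm of $\mathcal E$. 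Once this is established, the minimal strong attracting set coincides with $\mathcal A_\Sigma^w$ by the representation formula.

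\textbf{Key steps.} First I would exploit the Shatah--Struwe class from Theorem \ref{posed dis}, which yields $u\in L^4_{loc}(\mathbb R;L^{12})$ and thus a genuine energy identity for the solutions along the sequence. Extracting a weakly convergent subsequence $\xi_{u_n}(t_n)\rightharpoonup \xi_\infty$ and a weak limit $h_\infty\in\Sigma$ of the translated symbols $T(t_n)h_n$, weak continuity identifies $\xi_\infty$ as the value at $0$ of a complete bounded trajectory with symbol $h_\infty$. Strong convergence is then equivalent to convergence of the energies $E(u_n)=\tfrac12\|\xi_{u_n}\|_{\mathcal E}^2+\int_\Omega F(u_n)$; the potential part passes via the compact embedding $H^1_0\hookrightarrow L^{p+1}$ (valid for $p<5$) and the Strichartz bound, so the problem reduces to controlling the forcing term $\int_{\tau}^{t_n}(h_n(s),\partial_t u_n(s))\,ds$ in the limit.

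\textbf{The role of space/time regularity.} Because $h_n\to h_\infty$ merely in $L^{2,w}_{loc}(\mathbb R;L^2)$, one cannot directly pass to the limit in the product $(h_n,\partial_t u_n)$. Here the space- or time-regularity of $g$ (inherited by the whole hull $\Sigma$) is used: approximate $g$ in $L^2_b(\mathbb R;L^2)$ by $g_\varepsilon\in L^2_b(\mathbb R;H^1)$ or $g_\varepsilon\in H^1_b(\mathbb R;L^2)$ and split the solution as $u=u_\varepsilon+r_\varepsilon$, where $u_\varepsilon$ solves \eqref{eq2.3} with the regularised symbol. A standard multiplier/differentiation argument (testing with $(-\Delta)^s\partial_t u_\varepsilon$ in the space-regular case, or with $\partial_t^2 u_\varepsilon$ in the time-regular case) combined with the Strichartz energy-to-Strichartz control yields an $\varepsilon$-uniform bound of $u_\varepsilon$ in a space compactly embedded in $\mathcal E$, giving strong precompactness of the corresponding trajectories. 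The remainder $r_\varepsilon$ is estimated by the linear theory of Theorem 2.1 through $\|g-g_\varepsilon\|_{L^2_b}\le\varepsilon$ and a Gronwall estimate derived from the difference of Shatah--Struwe solutions. Letting $\varepsilon\to0$ transfers precompactness from the approximants to the original sequence.

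\textbf{Main obstacle.} The principal difficulty is the sup-cubic nonlinearity: without an a priori $L^\infty$-bound, the control of $f(u_n)\partial_t u_n$ and of the difference $f(u)-f(u_\varepsilon)$ must go through the Strichartz norm $L^4_tL^{12}_x$ from \eqref{eq2.2}, which is sharp when $p<5$. A secondary subtlety is that the difference estimate has to be \emph{uniform in the symbol} $h\in\Sigma$, so one needs the energy-to-Strichartz mechanism to yield bounds depending only on $\|g\|_{L^2_b(\mathbb R;L^2)}$, not on $h_n$ individually; verifying this uniformity, via a bootstrap starting from the linear estimate \eqref{linenergyest}, is the most delicate point of the argument.
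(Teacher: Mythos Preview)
The paper does not prove this theorem at all; it is quoted from \cite{MS19,SZ20} without argument, so there is nothing in the paper to compare your proposal against directly. What I can evaluate is whether your outline reproduces the argument of the cited references.

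Your overall architecture---establish the weak attractor via Theorem~\ref{exist thm}, then upgrade to strong asymptotic compactness, then invoke minimality---is correct and is the route taken in \cite{Zelik15,MS19,SZ20}. The time-regular branch of your argument (differentiate in $t$, bound $\partial_t u_\varepsilon$ in $\mathcal E$, recover $\xi_{u_\varepsilon}$ in $\mathcal E^1$ via the equation) is essentially what the paper itself carries out later in Proposition~\ref{compattr}, and it is sound.

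The gap is in the space-regular branch. You write that one approximates $g$ by $g_\varepsilon\in L^2_b(\mathbb R;H^1)$ and then tests the $u_\varepsilon$-equation with $(-\Delta)^s\partial_t u_\varepsilon$ to gain spatial regularity. But that is not what ``space regular'' means in Definition~2.1: the approximant $g_\varepsilon$ lies in $L^2_b(\mathbb R;H_\varepsilon)$ with $H_\varepsilon$ a \emph{finite-dimensional} subspace of $L^2(\Omega)$, and there is no reason for $H_\varepsilon\subset H^1$. Hence the multiplier $(-\Delta)^s\partial_t u_\varepsilon$ is not admissible and your compactness mechanism for $u_\varepsilon$ breaks down. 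You have conflated the general space-regular hypothesis of Theorem~\ref{wunifexi} with the stronger assumption \eqref{gs-reg} used elsewhere in the paper; \eqref{gs-reg} does imply space regularity, but not conversely. The proof in \cite{Zelik15,SZ20} handles the genuine space-regular case by the energy method (Ball/Moise--Rosa--Wang): one passes to the limit directly in the energy identity, and the finite-dimensionality of $H_\varepsilon$ is used to show that the forcing term $\int(h_n,\partial_t u_n)\,ds$ converges (projecting $\partial_t u_n$ onto $H_\varepsilon$ gives a precompact sequence). No solution splitting or higher spatial regularity is invoked.
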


Let us recall the definition of the Kolmogorov $\varepsilon$-entropy of a (pre)compact set $K$ in a metric space $X$. For a given $\varepsilon>0$, let $N_{\varepsilon}(K;X)=N_{\varepsilon}(K)$ be the minimal number of $\varepsilon$-balls in $X$ which is necessary to cover $K$. Since $K$ is (pre)compact, the number $N_{\varepsilon}(K)<\infty$ for all $\varepsilon>0$ due to Hausdorff's criteria.
\begin{definition}[\cite{CV02,KT59}]\label{def2.1}
The Kolmogorov $\varepsilon$-entropy of the set $K$ in the space $X$ is the number
\begin{equation*}
\mathbb{H}_{\varepsilon}(K;X)=\mathbb{H}_{\varepsilon}(K)=\log_{2}N_{\varepsilon}(K).
\end{equation*}
\end{definition}

\section{The weak topology entropy $\mathbb{H}_{\varepsilon}^w(A,X;\,B,Y)$}\label{s3.m}
Let $X$ and $Y$ be two Banach spaces with the embedding $X$ to $ Y^*$ is compact (w.r.t. the strong topology of $Y^*$) and $Y$ is reflexive.

In the following, for the bounded subsets $A\subset X$ and $B\subset Y$, we will define an entropy $\mathbb{H}_{\varepsilon}^w(A,X;\,B,Y)$ with respect to the weak topology of $Y$ for the subsets-spaces pair $(A,X;\,B, Y)$.

\begin{definition}
Let $A\subset X$ and $B\subset Y$ and $\varepsilon>0$ be given. An $(A,X;\,B,Y;\,\varepsilon)$-weak partition of $B$ is a partition $\cup_{i\in \Lambda}V_i$ which satisfies $B\subset \cup_{i\in \Lambda}V_i$ and
for each $V_i$ $(i\in \Lambda)$, the following estimate hold:
\[
|\langle f,\,v-\bar{v} \rangle|<\varepsilon ~~\forall~f\in A,~~v,\bar{v}\in B\cap V_i.
\]
\end{definition}

For any $\varepsilon>0$, let $N_{\varepsilon}^w(A,X;\,B,Y)$ be the minimal number of the cardinality of $\Lambda$ with $\cup_{i\in \Lambda}V_i$ is an $(A,X;\,B,Y;\,\varepsilon)$-weak partition of $B$.

 If $A$ is bounded in $X$ and $B$ is bounded in $Y$, then by the compactness of the embedding $X\hookrightarrow Y^*$ and reflexive property of $Y$, we have that the number $N_{\varepsilon}^w(A,X;\,B,Y)<\infty$ for all $\varepsilon>0$.

Indeed, we will construct a special weak partition of $B$ that each $V_{i}$, $i\in\Lambda$ can be presented as the form of a neighborhood in weak topology. So let us first recall that for any $v_0\in  Y$, given $\eta>0$ and a finite set $\{f_1,f_2,\cdots,f_k\}\subset Y^*$, the set
\begin{equation}\label{nerghborhood}
V_{v_0}=V(f_1,f_2,\cdots,f_k;\eta):=\{v\in Y:\,|\langle f_i,\,v-v_0 \rangle|<\eta,~\forall\,i=1,2,\cdots,k\}
\end{equation}
is an open neighborhood of $v_0$ for the weak topology of $Y$, and we can obtain a basis of neighborhoods (w.r.t. weak topology of $Y$) of $v_0$ by varying $\eta$, $k$ and the $f_i's$ in $Y^*$. Here $\langle\cdot,\,\cdot \rangle$ is the dual product of $Y^*$ and $Y$.

Since for any $\bar{\epsilon}>0$, $A$ has finite $\bar{\epsilon}$-net $\{f_{1},\cdots,f_{m}\}$ in $Y^*$, that is, for every $f\in A$
there exists $f_{i}$, $i\in \{1,\cdots,m\}$ such that
\begin{equation}\label{a1}
\begin{split}
&\|f-f_{i}\|_{Y^*}\leqslant \bar{\epsilon}.
\end{split}
\end{equation}

Then, for each $v_0\in B$ and $\eta>0$, we define
\begin{equation}\label{a2}
\mathcal{O}_{v_0}=\mathcal{O}_{v_0}(f_{1},\cdots,f_{m};\eta)=\{v\in Y;|\langle f_{i},v-v_0\rangle|<\eta,~i=1,\cdots,m\}.
\end{equation}
Clearly, $\mathcal{O}_{v_0}$ is a neighborhood of $v_0$ for the weak topology of $Y$. By the reflexive property of $Y$ we know that $B$ is weak compact, thus, there exist finite $v_{1},\cdots,v_{M}\in B$ such that
\begin{equation}\label{a3}
B\subset\cup_{j=1}^{M}\mathcal{O}_{v_{j}}.
\end{equation}
Note that, for any $f\in A$ and any $v,\bar{v}\in \mathcal{O}_{v_j}\cap B$, we have that
\begin{equation}\label{3.5}
\begin{split}
|\langle f,\,v-\bar{v} \rangle| &\leqslant |\langle f-f_i,\,v-\bar{v} \rangle| +|\langle f_i,\,v-\bar{v} \rangle|\\
 &\leqslant |\langle f-f_i,\,v-\bar{v} \rangle| +|\langle f_i,\,v-v_j \rangle|+|\langle f_i,\,v_j-\bar{v} \rangle|\\
 &\leqslant 2\|B\|_{Y}\bar{\epsilon}+2\eta,
\end{split}
\end{equation}
where $\|B\|_{Y}$ is the bound of $B$ in $Y$. Then, by taking first $\bar{\epsilon}$ small, e.g., $\bar{\epsilon}<\varepsilon /4\|B\|_{Y}$, to find $f_1,\cdots, f_m$, then taking $\eta$ small enough, e.g., $\eta <\varepsilon /4$, we can find that the special partition defined in \eqref{a3} is an $(A,X;\,B,Y;\,\varepsilon)$-weak partition of $B$ with finite neighborhoods.

\begin{definition}
Let $A\subset X$ and $B\subset Y$ and $\varepsilon>0$ be given. The $(A,X;\,B,Y;\,\varepsilon)$-weak topology entropy $\mathbb{H}_{\varepsilon}^w(A,X;\,B,Y)$ of the set $B$ in $Y$ corresponding to set $A$ in $X$  is the number
\begin{equation*}
\mathbb{H}_{\varepsilon}^w(A,X;\,B,Y)=\log_{2}N_{\varepsilon}^w(A,X;\,B,Y).
\end{equation*}
\end{definition}
\begin{remark}
The definitions proposed above still valid if only know $A\subset Y^{\ast}$ is compact. Here, we  involve the additional space $X$ is just designed for our application in Sections 4 and 5, and we will estimate this bound explicitly and to make clear its dependencies for system \eqref{eq1.1}. For example, if $A$ and $B$ are some balls in $X$ and $Y$ respectively, e.g.,  for given $r,s\geqslant 0$, denote
\[
A_r:=\{u\in X:\, \|u\|_{X}\leqslant r\}\quad \text{and}\quad B_s:=\{v\in Y:\, \|v\|_{Y}\leqslant s\},
\]
then the $(A_r,X;\,B_s,Y;\,\varepsilon)$-weak topology entropy $\mathbb{H}_{\varepsilon}^w(A_r,X;\,B_s,Y)$ will depends only on the spaces $X$ and $Y$ and the size $r,s$. For this situation, we will denote $\mathbb{H}_{\varepsilon}^w(A_r,X;\,B_s,Y)$ by $\mathbb{H}_{\varepsilon}^w(r,X;\,s,Y)$.
\end{remark}

\section{Smoothness of uniform attractors}\label{s4}
\begin{proposition}\label{compattr}
Let the assumptions of Theorem \ref{wunifexi} hold, and in addition, let the external force $g$ satisfies \eqref{gt-reg} or \eqref{gs-reg}. Then, if $R$ is large enough, the closed ball $\Cal B_1$ of radius $R$ in the space $\Cal E^{1}:=[H^{2}(\Omega)\cap H_0^{1}(\Omega)]\times H_0^{1}(\Omega)$ % for some sufficiently small $\alpha>0$
is a uniformly attracting set for the process $U_g(t,\tau)$ associated with equation \eqref{eq1.1}. Namely, there
are positive constant $\beta$ and a monotone increasing function $Q$ such that, for any bounded set $B\subset \Cal E$,
$$
\dist_{\Cal E}(U_g(t+\tau,\tau)B,\Cal B_1)\le Q(\|B\|_{\Cal E})e^{-\beta t}, \ \ t\geq0
$$
holds uniformly with respect to $\tau\in\R$.
\end{proposition}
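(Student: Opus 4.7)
The strategy is the classical one of asymptotic regularisation by splitting the solution into an exponentially decaying piece that carries the initial data and a more regular piece with zero initial data that absorbs the forcing. Concretely, let $w$ be the Shatah--Struwe solution of
\begin{equation*}
\partial_t^{2}w+\gamma\partial_t w-\Delta w+f(w)=g(t),\qquad \xi_w|_{t=\tau}=0,
\end{equation*}
guaranteed by Theorem~\ref{posed dis}, and set $v:=u-w$. Then $v$ satisfies
\begin{equation*}
\partial_t^{2}v+\gamma\partial_t v-\Delta v+\big(f(v+w)-f(w)\big)=0,\qquad \xi_v|_{t=\tau}=\xi_\tau,
\end{equation*}
while, by Theorem~\ref{posed dis} applied to $w$ with zero initial data, $\xi_w$ is uniformly bounded in $\mathcal E$ and $w\in L^{4}_{\mathrm{loc}}(\R;L^{12}(\Omega))$ uniformly in $\tau$.

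The first goal is to prove exponential decay of $\xi_v$ in $\mathcal E$. One tests the equation for $v$ against $\partial_t v+\delta v$ for a small $\delta>0$ and treats $f(v+w)-f(w)=v\int_{0}^{1}f'(w+\theta v)\,d\theta$ as a linear perturbation with potential bounded by $C(1+|u|^{p-1}+|w|^{p-1})$. Using the Strichartz integrability $u,w\in L^{4}_{\mathrm{loc}}(L^{12})$ together with H\"older and Gronwall, the perturbation is absorbed into the damping $\gamma\partial_t v$ (the sub-quintic restriction $p<5$ is crucial here), yielding
\begin{equation*}
\|\xi_v(t)\|_{\mathcal E}\le Q(\|\xi_\tau\|_{\mathcal E})\,e^{-\beta(t-\tau)}.
\end{equation*}

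The second goal is to show that $\xi_w(t)$ is uniformly bounded in $\mathcal E^{1}=[H^{2}\cap H^{1}_{0}]\times H^{1}_{0}$ after a large enough time, exploiting the extra regularity of $g$ together with the fact that $\xi_w|_\tau=0$ avoids any loss coming from rough initial data. In the time-regular case \eqref{gt-reg}, set $z:=\partial_t w$; then $z$ solves
\begin{equation*}
\partial_t^{2}z+\gamma\partial_t z-\Delta z+f'(w)z=\partial_t g,\qquad z(\tau)=0,\quad\partial_t z(\tau)=g(\tau)-f(0),
\end{equation*}
with $\xi_z(\tau)\in\mathcal E$ and $\partial_t g\in L^{2}_b(\R;L^{2}(\Omega))$; the linear estimate \eqref{linenergyest} then yields $\xi_z\in L^{\infty}(\mathcal E)$, after which reading $-\Delta w=g-\partial_t z-\gamma z-f(w)$ pointwise in $t$ gives $w\in L^{\infty}(H^{2}\cap H^{1}_{0})$. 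In the space-regular case \eqref{gs-reg}, one instead applies \eqref{linenergyest} at level $\alpha=1$ to $\partial_t^{2}w+\gamma\partial_t w-\Delta w=g-f(w)$, bounding $\|f(w)\|_{H^{1}}=\|f'(w)\nabla w\|_{L^{2}}$ by H\"older and Sobolev using the $L^{\infty}(H^{1}_0)$- and $L^{4}(L^{12})$-control of $w$. Choosing $R$ larger than the resulting uniform $\mathcal E^{1}$-bound of $\xi_w$ and combining with the decay of $\xi_v$ yields the claimed exponential attraction, uniformly in $\tau\in\R$.

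The genuine obstacle is the nonlinear estimate of $f'(w)z$ (time-regular) or $f'(w)\nabla w$ (space-regular) in $L^{2}(\Omega)$ uniformly in time: since $p<5$ can be arbitrarily close to the critical quintic exponent, the embedding $H^{1}\hookrightarrow L^{6}$ alone is insufficient, and one must use the refined Shatah--Struwe bound $w\in L^{4}_{\mathrm{loc}}(L^{12})$ supplied by Theorem~\ref{posed dis} in a delicate H\"older/interpolation argument; this is precisely where the recent Strichartz estimates on bounded domains of \cite{BSS09,BLP08} enter in an essential way.
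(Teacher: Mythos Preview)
Your splitting is different from the paper's, and the difference is not cosmetic: it is precisely where the argument breaks. In the paper the decaying piece $v$ solves the \emph{free linear} equation
\[
\partial_t^2 v+\gamma\partial_t v-\Delta v=0,\qquad \xi_v|_{t=\tau}=\xi_\tau,
\]
and the entire nonlinearity $f(u)$ is pushed into $w$. Your $w$, by contrast, is a \emph{full nonlinear solution} with zero data, so your $v=u-w$ is the difference of two genuine Shatah--Struwe solutions of \eqref{eq1.1} sharing the same forcing. Such differences do \emph{not} decay exponentially in general: if they did, every kernel section $\mathcal K_h(t)$ would be a single point and the attractor would be trivial. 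Concretely, testing by $\partial_t v+\delta v$ gives
\[
\frac{d}{dt}E_v+cE_v\le C\big(1+\|u\|_{L^{12}}^{p-1}+\|w\|_{L^{12}}^{p-1}\big)\,\|\xi_v\|_{\mathcal E}^2,
\]
and since the dissipative estimate \eqref{eq2.2} only yields $\int_t^{t+1}\|u\|_{L^{12}}^{4}\le C$ (a bound, not smallness), Gronwall produces $e^{(C-c)(t-\tau)}$, i.e.\ exponential \emph{growth}. The perturbation cannot be ``absorbed into the damping $\gamma\partial_t v$'' because its coefficient is large and it hits $\|\nabla v\|^2$ as well as $\|\partial_t v\|^2$. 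This is a genuine gap; the paper's linear choice of $v$ is exactly what circumvents it.

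There is a second gap in the regularity step. You aim for $\xi_w\in\mathcal E^{1}$ in one shot, but controlling $\|f'(w)\nabla w\|_{L^2}$ (space-regular case) or $\|f(w)\|_{L^2}$ pointwise in $t$ (for the elliptic recovery of $H^2$ in the time-regular case) fails for $3<p<5$: with only $\nabla w\in L^\infty(L^2)$ and $w\in L^4(L^{12})$, H\"older forces $\nabla w\in L^b$ with $b=\tfrac{12}{7-p}>2$, which is unavailable. The paper does not attempt this. Instead it shows, via $\|f(u)\|_{L^1(t,t+1;W^{1,\kappa})}$ with $\kappa=\tfrac{12}{5+p}$ and the embedding $W^{1,\kappa}\subset H^\alpha$, that $\xi_w\in\mathcal E^{\alpha}$ with $\alpha=\tfrac{5-p}{4}\in(0,1)$, obtaining an exponentially attracting ball in $\mathcal E^{\alpha}$; then it iterates (each pass gains $\alpha$ in regularity, using $\nabla u\in L^\infty(L^{6/(3-2\alpha)})$) and invokes transitivity of exponential attraction \cite{FGMZ04} to reach $\mathcal E^{1}$ after finitely many steps. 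Your proposal is missing this bootstrap entirely.
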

\begin{proof}
We split the solution $u(t)$ as $u(t)=v(t)+w(t)$, where $v(t)$ solves the linear problem
\begin{equation}\label{eqv}
\Dt^{2}v+\gamma\Dt v-\Delta v=0, \ \ \xi_v\big|_{t=\tau}=\xi_\tau,
\end{equation}
the reminder $w$ satisfies
\begin{equation}\label{eqw}
\Dt^{2}w+\gamma\Dt w-\Delta w+f(u)=g, \ \ \xi_w\big|_{t=\tau}=0.
\end{equation}
Multiplying \eqref{eqv} by $\Dt v+\beta v$ with sufficiently small $\beta>0$,
%Using the energy estimate \eqref{linenergyest},
we obtain that the solution $v(t)$ satisfies the following estimate
\begin{equation}\label{vesti}
\|\xi_v(t)\|_{\Cal E}\le Q(\|\xi_\tau\|_{\Cal E})e^{-\beta(t-\tau)}, \ \ t\geq\tau,
\end{equation}
where the positive constant $\beta$ and the monotone increasing function $Q$ are independent of $t$, $\tau$ and $\xi_\tau$.

Now, we turn to the equation \eqref{eqw}. In the case of \eqref{gs-reg},
using the standard $\Cal E^{\alpha}$-energy estimate \eqref{linenergyest}, we have
$$
\|\xi_w(t)\|_{\Cal E^{\alpha}}\le C\int_\tau^{t}e^{-\beta(t-s)}(\|f(u)\|_{H^{\alpha}}+\|g(s)\|_{H^{\alpha}})ds\le C(\|f(u)\|_{L_b^{1}(\R;H^{\alpha})}+\|g\|_{L_b^{2}(\R;H^{\alpha})}).
$$
Using the H\"{o}lder inequality, the growth condition of $f$ and estimate \eqref{eq2.2}, we have
\begin{equation}\label{fest}
\begin{split}
&\|f(u)\|_{L^{1}(t,t+1;W^{1,\kappa})}\le C\|f'(u)\nabla u\|_{L^{1}(t,t+1;L^{\kappa})}\\
&\ \ \ \ \ \ \ \ \ \ \ \ \ \ \ \ \ \ \ \ \ \ \ \ \ \le C(1+\|u\|_{L^{4}(t,t+1;L^{12})}^{4})\|\nabla u\|_{L^{\infty}(t,t+1;L^{2})}\\
&\ \ \ \ \ \ \ \ \ \ \ \ \ \ \ \ \ \ \ \ \ \ \ \ \ \le Q(\|\xi_\tau\|_{\Cal E})e^{-\beta(t-\tau)}+Q(\|g\|_{L_b^{2}(\R;L^{2})}), \ \ t\geq\tau,
\end{split}
\end{equation}
where $\frac{1}{\kappa}=\frac{1}{2}+\frac{p-1}{12}$. Using the embedding $$W^{1,\kappa}\subset H^{\alpha} \ \ \text{with} \ \  \frac{1}{2}=\frac{1}{\kappa}-\frac{1-\alpha}{3},$$
i.e. $$\alpha=\frac{5-p}{4}>0,$$ we obtain that
$$
\|f(u)\|_{L^{1}(t,t+1;H^{\alpha})}\le Q(\|\xi_\tau\|_{\Cal E})e^{-\beta(t-\tau)}+Q(\|g\|_{L_b^{2}(\R;L^{2})}).
$$
\begin{comment}
Using the H\"{o}lder inequality and the estimate \eqref{eq2.2}, we have
\begin{equation*}
\begin{split}
&\|f(u)\|_{L^{1}(t,t+1;W^{1,\frac{12}{5+p}})}\le C\|f'(u)\Nx u\|_{L^{1}(t,t+1;L^{\frac{12}{5+p}})}\\
&\ \ \ \ \ \ \ \ \ \ \ \ \ \ \ \ \ \ \ \ \ \ \ \ \ \ \ \le C(1+\|u\|_{L^{4}(t,t+1;L^{12})}^{4})\|\nabla u\|_{L^{\infty}(t,t+1;L^{2})}\\
&\ \ \ \ \ \ \ \ \ \ \ \ \ \ \ \ \ \ \ \ \ \ \ \ \ \ \ \le Q(\|\xi_\tau\|_{\Cal E})e^{-\beta(t-\tau)}+Q(\|g\|_{L_b^{2}(\R;L^{2})}).
\end{split}
\end{equation*}
We also have
$$
\|f(u)\|_{L^{1}(t,t+1;L^{\frac{10}{p}})}\le C(1+\|u\|_{L^{5}(t,t+1;L^{10})}^{5})\le Q(\|\xi_\tau\|_{\Cal E})e^{-\beta(t-\tau)}+Q(\|g\|_{L_b^{2}(\R;L^{2})}).
$$
The interpolation inequality
$$\|U\|_{H^{\alpha}}\le C\|U\|_{W^{1,\frac{12}{5+p}}}^{1-\theta}\|U\|_{L^{\frac{10}{p}}}^{\theta}, \ \ \theta=\frac{5(p-1)}{25-p}, \ \ \alpha=\frac{30-6p}{25-p}.$$
%where $\frac{1}{2}=\frac{1}{\kappa}-\frac{1-\alpha}{3}$,
Then, we arrive at the estimate
$$
\|f(u)\|_{L^{1}(t,t+1;H^{\alpha})}\le Q(\|\xi_\tau\|_{\Cal E})e^{-\beta(t-\tau)}+Q(\|g\|_{L_b^{2}(\R;L^{2})}).
$$
\end{comment}
Finally, we arrive at the estimate
\begin{equation}\label{westi}
\|\xi_w(t)\|_{\Cal E^{\alpha}}\le Q(\|\xi_\tau\|_{\Cal E})e^{-\beta(t-\tau)}+Q(\|g\|_{L_b^{2}(\R;H^{\alpha})}),
\end{equation}
where the constant $\beta$ and the monotone function $Q$ are independent of $t$, $\tau$, $g$ and $\xi_\tau$.

Now let \eqref{gt-reg} be satisfied. We split the solution $w(t)$ of equation \eqref{eqw} as $w(t)=y(t)+z(t)$, where $z(t)$ solves the linear wave equation
\begin{equation}\label{eqz}
\Dt^{2}z+\gamma\Dt z-\Delta z=g, \ \ \xi_z\big|_{t=\tau}=0
\end{equation}
and the function $y(t)$ solves
\begin{equation}\label{eqy}
\Dt^{2}y+\gamma\Dt y-\Delta y+f(u)=0, \ \ \xi_y\big|_{t=\tau}=0.
\end{equation}
Similar to the proof of estimate \eqref{westi}, we have
\begin{equation}\label{yesti}
\|\xi_y(t)\|_{\Cal E^{\alpha}}\le Q(\|\xi_\tau\|_{\Cal E})e^{-\beta(t-\tau)}+Q(\|g\|_{L_b^{2}(\R;L^{2})}).
\end{equation}
Then by differentiating \eqref{eqz} with respect to time and writing $\bar{z}:=\Dt z$, we get that
$$
\Dt^{2}\bar z+\gamma\Dt \bar z-\Delta \bar z=g'(t), \ \ \xi_{\bar z}\big|_{t=\tau}=(0,g(\tau)).
$$
Using the standard energy estimate and together with the Sobolev embedding $H_{b}^{1}(\mathbb{R};L^{2}(\Omega))\subset L^{\infty}_{b}(\mathbb{R};L^{2}(\Omega))$, we have
\begin{equation}\label{zesti1}
\|\xi_{\bar{z}}(t)\|_{\Cal E}\le C\|g\|_{H_b^{1}(\R;L^{2})}, \ \ t\geq\tau.
\end{equation}
Expressing the term $\Delta z(t)$ from equation \eqref{eqz} and taking the $L^{2}$-norm from the both sides of the equation obtained, and combining with \eqref{zesti1} we have that, for all $t\geq\tau$
\begin{equation}\label{zesti2}
\|z(t)\|_{H^{2}} \le C\|g\|_{H_{b}^{1}(\mathbb{R};L^{2}(\Omega))}.
\end{equation}
Combining \eqref{zesti1} and \eqref{zesti2}, we obtain that
\begin{equation}\label{zesti}
\|\xi_{z}(t)\|_{\mathcal{E}^{1}}\leq C\|g\|_{H_{b}^{1}(\mathbb{R};L^{2}(\Omega))}.
\end{equation}

From \eqref{westi}, \eqref{yesti} and \eqref{zesti}, %we get that
%\begin{equation}\label{westi1}
%\|\xi_w(t)\|_{\Cal E^{\alpha}}\le Q(\|\xi_\tau\|_{\Cal E})e^{-\beta(t-\tau)}+Q(\|g\|_{H_b^{1}(\R;L^{2})}).
%\end{equation}
%Thus,
we conclude that
\begin{equation}\label{westimate}
\|\xi_w(t)\|_{\Cal E^{\alpha}}\le Q(\|\xi_\tau\|_{\Cal E})e^{-\beta(t-\tau)}+Q(\|g\|_{W}),
\end{equation}
where the symbol $W$ means the space $H_{b}^{1}(\mathbb{R};L^{2}(\Omega))$ if \eqref{gt-reg} is satisfied or $L_{b}^{2}(\mathbb{R};H^{1}(\Omega))$ if \eqref{gs-reg} is satisfied, the positive constant $\beta$ and the monotone increasing function $Q$ are independent of $t,\tau$, $g$ and $u$.
According to the estimates \eqref{vesti} and \eqref{westimate}, the set
$$
\Cal B_\alpha:=\{\xi\in\Cal E^{\alpha};\|\xi\|_{\Cal E^{\alpha}}\le R\}
$$
is a compact uniformly (w.r.t. $\tau\in\R$) attracting set for the process $U_g(t,\tau)$ in $\Cal E$ if $R$ is large enough.

In order to get the higher regularity, we will use standard bootstrap arguments. If we take $\xi_\tau\in \Cal B_\alpha$ from the very beginning, we have
$$
\|\xi_v(t)\|_{\Cal E^{\alpha}}\le Q(\|\xi_\tau\|_{\Cal E^{\alpha}})e^{-\beta(t-\tau)}, \ \ t\geq\tau,
$$
which together with \eqref{westimate} shows that the dynamical process $U_g(t,\tau)$ is well
defined and dissipative in the higher energy space $\Cal E^{\alpha}$ as well. Due to that $H^{\alpha}\subset L^{\frac{6}{3-2\alpha}}$, we can improve
estimate \eqref{fest} as follows:
\begin{equation*}
\begin{split}
&\|f(u)\|_{L^{1}(t,t+1;W^{1,\kappa_1})}\le C(1+\|u\|_{L^{4}(t,t+1;L^{12})}^{4})\|\nabla u\|_{L^{\infty}(t,t+1;L^{\frac{6}{3-2\alpha}})}\\
& \ \ \ \ \ \ \ \ \ \ \ \ \ \ \ \ \ \ \ \ \ \ \ \ \ \ \le Q(\|\xi_\tau\|_{\Cal E^{\alpha}})e^{-\beta(t-\tau)}+Q(\|g\|_{W}), \ \ t\geq\tau,
\end{split}
\end{equation*}
where $\frac{1}{\kappa_1}=\frac{3-2\alpha}{6}+\frac{p-1}{12}$. Using the embedding $W^{1,\kappa_1}\subset H^{\alpha_1}$ with  $\alpha_1=\frac{5-p}{4}+\alpha$, we have
$$
\|f(u)\|_{L^{1}(t,t+1;H^{\alpha_1})}\le Q(\|\xi_\tau\|_{\Cal E^{\alpha}})e^{-\beta(t-\tau)}+Q(\|g\|_{W}).
$$
Thus, we get
\begin{equation}\label{westi1}
\|\xi_w(t)\|_{\Cal E^{\alpha_1}}\le Q(\|\xi_\tau\|_{\Cal E^{\alpha}})e^{-\beta(t-\tau)}+Q(\|g\|_{W}).
\end{equation}
By the transitivity of an exponential attraction established in \cite{FGMZ04}, the dynamical process $U_g(t,\tau)$ on $\Cal B_\alpha$
has an exponentially attracting set $\Cal B_{\alpha_1}\subset \Cal E^{\alpha_1}$. Then iterating the above procedure in finitely many steps, we get the exponentially attracting
ball $\Cal B_1$ in the space $\Cal E^{1}$.
\end{proof}

\begin{corollary}\label{compabsorb}
Let the assumptions of Proposition \ref{compattr} hold, and in addition let the initial data $\xi_\tau\in \Cal E^{1}$.
Then the solution $u$ of problem \eqref{eq1.1} satisfies $\xi_u(t)\in \Cal E^{1}$ for all $t\geq\tau$ and the following estimate holds:
\begin{equation}\label{reg}
\|\xi_u(t)\|_{\Cal E^{1}}\le Q(\|\xi_\tau\|_{\Cal E^{1}})e^{-\beta(t-\tau)}+Q(\|g\|_W),
\end{equation}
where the symbol $W$ means the space $H_{b}^{1}(\mathbb{R};L^{2}(\Omega))$ if \eqref{gt-reg} is satisfied or $L_{b}^{2}(\mathbb{R};H^{1}(\Omega))$ if \eqref{gs-reg} is satisfied, the positive constant $\beta$ and the monotone increasing function $Q$ are independent of $t,\tau$, $g$ and $u$.
\end{corollary}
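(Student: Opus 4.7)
The plan is to repeat the splitting $u=v+w$ from the proof of Proposition \ref{compattr} and sharpen each piece by exploiting the stronger initial data assumption $\xi_\tau\in\Cal E^1$. The homogeneous linear component $v$ solving \eqref{eqv} satisfies the $\Cal E^1$-analogue of \eqref{vesti}: applying \eqref{linenergyest} with $\alpha=1$ and $G\equiv 0$ (or, equivalently, the standard energy identity applied to $(-\Delta)^{1/2}v$) yields
\[
\|\xi_v(t)\|_{\Cal E^1}\le Q(\|\xi_\tau\|_{\Cal E^1})\,e^{-\beta(t-\tau)},\qquad t\ge\tau.
\]

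The bulk of the work is to bound $w$ in $\Cal E^1$ by rerunning the bootstrap that closes the proof of Proposition \ref{compattr}. The base step reproduces \eqref{fest}--\eqref{westi} in the space-regular case (and additionally uses \eqref{zesti} via the further splitting $w=y+z$ of \eqref{eqz}--\eqref{eqy} in the time-regular case), giving
\[
\|\xi_w(t)\|_{\Cal E^{\alpha}}\le Q(\|\xi_\tau\|_{\Cal E})\,e^{-\beta(t-\tau)}+Q(\|g\|_W),\qquad \alpha=\tfrac{5-p}{4}.
\]
Since $\xi_\tau\in\Cal E^1\subset\Cal E^{\alpha_k}$ for every intermediate index $\alpha_k\le 1$, the full solution $u=v+w$ itself satisfies the corresponding $\Cal E^{\alpha_k}$-dissipative estimate. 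This upgrades the nonlinear bound through the embedding $H^{\alpha_k}\subset L^{6/(3-2\alpha_k)}$ exactly as in \eqref{westi1}, yielding inductively
\[
\|\xi_w(t)\|_{\Cal E^{\alpha_{k+1}}}\le Q(\|\xi_\tau\|_{\Cal E^{\alpha_k}})\,e^{-\beta(t-\tau)}+Q(\|g\|_W),\qquad \alpha_{k+1}=\alpha_k+\tfrac{5-p}{4}.
\]
Since $(5-p)/4>0$ when $1\le p<5$, after finitely many iterations one reaches some $\alpha_N\ge 1$. Bounding $\|\xi_\tau\|_{\Cal E^{\alpha_{N-1}}}\le C\|\xi_\tau\|_{\Cal E^1}$ and combining with the estimate for $v$ produces \eqref{reg}. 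In the time-regular case, at the final increment one again invokes $w=y+z$ so that the $g$-driven linear piece $z$ is handled directly at the $\Cal E^1$ level via \eqref{zesti}, and only the nonlinear piece $y$ is passed through the bootstrap.

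The main technical nuisance is bookkeeping for the embedding chain: at each stage one must verify that the H\"older exponent $1/\kappa_k=(3-2\alpha_k)/6+(p-1)/12$ lies in $(0,1)$ and that the Sobolev embedding $W^{1,\kappa_k}\subset H^{\alpha_{k+1}}$ remains admissible for the current $\alpha_k$. These constraints cap the regularity gain per step but still permit finitely many iterations to cross $\alpha=1$ in the admissible range $p<5$. The qualitative assertion $\xi_u(t)\in\Cal E^1$ for all $t\ge\tau$ then follows from these a priori estimates via a standard approximation of the initial data (smooth $\xi_\tau$ by a sequence and pass to the limit using the continuous dependence provided by Theorem \ref{posed dis}).
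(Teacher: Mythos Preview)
Your proposal is correct and follows essentially the same route the paper intends: the paper's proof is a one-line remark that \eqref{reg} ``can be verified analogously to the proof of the estimation of the $w$-component in the higher energy space in Proposition \ref{compattr},'' and your expansion of that bootstrap (linear decay of $v$ in $\Cal E^1$, iterated $\Cal E^{\alpha_k}$ estimates for $w$ using $\xi_\tau\in\Cal E^1\subset\Cal E^{\alpha_k}$, with the additional $w=y+z$ splitting in the time-regular case) is exactly what that remark unpacks to. One minor clarification: since $g$ is only assumed in $H^1$ (space-regular case), the bootstrap should terminate at $\alpha=1$ rather than overshoot; this just means the last increment is taken with target exponent $\min(\alpha_{k}+(5-p)/4,\,1)$, which your remark about the embedding constraints already implicitly accommodates.
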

The proof of estimate \eqref{reg} can be verified analogously to the proof of the estimation of %higher regularity of
$w$-component in the higher energy space in Proposition %Corollary 1.3 The proof of estimate \eqref{reg} is analogous to the proof of Proposition
\ref{compattr}. %but essentially simpler, since now the initial data belong to $\Cal E^{\alpha}$ from the very beginning and we need not to split the solution $u(t)$.

Then, we will prove that $\Cal B_1$ is actually a compact uniformly (w.r.t. $h\in\Sigma$) attracting set
in $\Cal E$ for the family of processes $\{U_h(t,\tau),h\in\Sigma\}$.
\begin{lemma}
Let the family of processes $\{U_h(t,\tau),h\in\Sigma\}$ satisfies translation identity \eqref{trans}
and be weakly continuous. Then any compact uniformly (w.r.t. $\tau\in\R$) attracting set for the
process $U_g(t,\tau)$ is a simultaneously compact uniformly (w.r.t. $h\in\Sigma$) attracting set for the
family $\{U_h(t,\tau),h\in\Sigma\}$.
\end{lemma}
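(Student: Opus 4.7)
My plan is to exploit the translation identity \eqref{trans} to first handle symbols lying in the orbit $\{T(s)g:s\in\mathbb R\}$, and then to pass to arbitrary $h\in\Sigma=\mathcal H(g)$ by density together with the weak continuity of the process. Throughout I denote by $\mathcal O_\varepsilon(\mathcal B_1)=\{x\in\mathcal E:\mathrm{dist}_{\mathcal E}(x,\mathcal B_1)\le\varepsilon\}$ the closed $\varepsilon$-neighborhood of the given compact attracting set.

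First I would observe that for any fixed $s\in\mathbb R$ the translation identity gives $U_{T(s)g}(t,\tau)=U_g(t+s,\tau+s)$. Since $\mathcal B_1$ is uniformly (in $\tau\in\mathbb R$) attracting for $U_g$, and $\tau+s$ ranges over all of $\mathbb R$ as $(\tau,s)$ does, for every bounded $B\subset\mathcal E$ and every $\varepsilon>0$ I obtain a time $T=T(B,\varepsilon)$, independent of $s$ and $\tau$, such that $U_{T(s)g}(t,\tau)B\subset\mathcal O_\varepsilon(\mathcal B_1)$ whenever $t-\tau\ge T$. Second, for arbitrary $h\in\Sigma$, the density of the orbit in the hull together with the metrizability of the relevant weak topology on bounded sets of $\Xi$ furnishes a sequence $s_n$ with $T(s_n)g\to h$ in $\Xi$. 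Fixing $\xi\in B$, $\tau\in\mathbb R$ and $t\ge\tau+T$, the assumed weak continuity of $(\xi,h)\mapsto U_h(t,\tau)\xi$ yields
$$
U_{T(s_n)g}(t,\tau)\xi\rightharpoonup U_h(t,\tau)\xi \quad \text{in }\mathcal E,
$$
while all approximants lie in $\mathcal O_\varepsilon(\mathcal B_1)$ by the first step. I would then conclude $U_h(t,\tau)\xi\in\mathcal O_\varepsilon(\mathcal B_1)$, provided this set is sequentially weakly closed.

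The hard part is precisely the last sequential weak closedness claim, and it is where \emph{strong} compactness of $\mathcal B_1$ (rather than mere weak compactness) is decisive. Given $x_n\in\mathcal O_\varepsilon(\mathcal B_1)$ with $x_n\rightharpoonup x$, I would pick $b_n\in\mathcal B_1$ realizing $\|x_n-b_n\|_{\mathcal E}\le\varepsilon$, extract via strong compactness a subsequence with $b_n\to b\in\mathcal B_1$, and then invoke weak lower semicontinuity of the norm to get $\|x-b\|_{\mathcal E}\le\liminf_n\|x_n-b_n\|_{\mathcal E}\le\varepsilon$. Combining these three ingredients, $U_h(t,\tau)B\subset\mathcal O_\varepsilon(\mathcal B_1)$ for every $h\in\Sigma$, every $\tau\in\mathbb R$ and all $t-\tau\ge T$, which is exactly the required simultaneous uniform (w.r.t.\ $h\in\Sigma$) attraction. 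The same argument applies verbatim to any compact uniformly attracting set of $U_g(t,\tau)$, not only $\mathcal B_1$.
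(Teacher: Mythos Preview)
Your proof is correct and follows the same strategy as the paper: pass from $U_g$ to the orbit $\Sigma_0$ via the translation identity, then to the full hull $\Sigma$ via density plus weak continuity, using that the closed $\varepsilon$-neighborhood of a strongly compact set is sequentially weakly closed. Your argument for this last point (extract a strongly convergent subsequence of nearest points $b_n\in\mathcal B_1$ and invoke weak lower semicontinuity of the norm) is in fact cleaner than the paper's, which instead covers $P$ by finitely many $\eta$-balls, applies Mazur's theorem to trap the weak limit in a closed ball, and then lets $\eta\to 0$ after an $\varepsilon/2$ trick.
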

\begin{proof}
Due to the translation identity \eqref{trans}, any compact uniformly (w.r.t. $\tau\in\R$) attracting set for the process $U_g(t,\tau)$
is a simultaneously compact uniformly (w.r.t. $h\in \Sigma_0$) attracting set for the family $\{U_h(t,\tau),h\in\Sigma_0\}$. %where $\Sigma_0:=\{T(t)h,t\in\R\}$.

If $P$ is a compact uniformly (w.r.t. $h\in \Sigma_0$) attracting set for the family $\{U_h(t,\tau),h\in\Sigma_0\}$, that is, for every $\eb>0$ %any $\tau\in\R$
and every bounded set $B\subset \Cal E$, there exists $T_0=T_0(B,\eb)>0$ such that
\begin{equation}\label{attract shift}
\cup_{h\in\Sigma_0}U_h(t,0)B\subset \Cal O_{\eb/2}(P), \ \ \forall t\geq T_0. %, \ \ \forall\tau\in\R.
\end{equation}
We will prove that
\begin{equation}\label{attract hull}
\cup_{h\in\Sigma}U_h(t,0)B\subset\Cal O_{\eb}(P), \ \ \forall t\geq T_0. %\ \ \forall\tau\in\R.
\end{equation}
For any $y\in \cup_{t\geq T_0}\cup_{h\in\Sigma}U_h(t,0)B$, there exist $t\geq T_0$, $h\in\Sigma$ and $x\in B$ such that $y=U_h(t,0)x$. For such $h\in\Sigma$, there exists $\{h_n\}\in\Sigma_0$ such that
$h_n\rightarrow h$ in the weak topology of $L_{loc}^{2}(\R;H)$ as $n\rightarrow\infty$. Then the weak continuity of processes implies that
\begin{equation}\label{weak conv}
U_{h_n}(t,0)x\rightharpoonup U_h(t,0)x \ \ \text{in} \ \ \Cal E \ \ \text{as} \ \ n\rightarrow\infty.
\end{equation}
For every $\eta>0$, we have $P\subset\cup_{x\in P}B(x,\eta)$.
Since $P$ is a compact set in $\Cal E$, we have a finite subcovering such that $P\subset\cup_{i=1}^{N}B(x_i,\eta)$, $x_i\in P$. Then, $\Cal O_{\eb/2}(P)\subset\cup_{i=1}^{N}B(x_i,\eta+\eb/2)$. Due to that
$\{U_{h_n}(t,0)x\}\subset\Cal O_{\eb/2}(P)$ if $t\geq T_0$, there exists a subsequence of $\{U_{h_n}(t,0)x\}$ belongs to some ball $B(x_i,\eta+\eb/2)$. By Mazur theorem, we know that its weak limit $$U_h(t,0)x=y\in [B(x_i,\eta+\eb/2)]_{\Cal E}\subset[\Cal O_{\eta+\eb/2}(P)]_{\Cal E}.$$
 Let $\eta\rightarrow0$, we have that $y\in [\Cal O_{\eb/2}(P)]_{\Cal E}\subset\Cal O_{\eb}(P)$. This means that \eqref{attract hull} holds, thus $P$ is a compact uniformly (w.r.t. $h\in\Sigma$) attracting set.
On the other hand, \eqref{attract hull} always implies \eqref{attract shift} since $\Sigma_0\subset\Sigma$.
\end{proof}

Then by the minimality of the uniform attractor among closed
uniformly attracting sets we know that $\Cal A_\Sigma\subset\Cal B_1$ and is bounded in the space $\Cal E^{1}$. Moreover, by Corollary \ref{compabsorb} we have
\begin{equation}\label{embed}
\cup_{h\in\Sigma}U_h(t,0)\Cal A_\Sigma\subset\cup_{h\in\Sigma}U_h(t,0)\Cal B_1\subset\Cal B_1, \ \ t\geq T_0.
\end{equation}
%It is also immediately has the following result.

\section{Uniform exponential attracting sets}\label{s.5}
In this section, we will construct a family of sets
which enjoys uniform exponential attracting property and has uniform finite fractal dimension.
\begin{lemma}\label{lem4.1}
Let $X_{0}$ and $X_{1}$ be two Banach spaces such that $X_{1}$ is compactly embedded into $X_{0}$ and $\mathbb{B}$ be a bounded subset of $X_{0}$. For any fixed $h\in\Sigma$,
let $U_{h}(n):=U_{h}(n+1,n),~n\in\mathbb{Z}$ be a family of discrete dynamical processes on $X_{0}$ satisfying the following properties:
\begin{itemize}
\item[$(a)$] for every $h\in\Sigma$, the process $U_{h}(n):\mathbb{B}\rightarrow\mathbb{B}$;
\item[$(b)$] the process $U_{h}(n)$ admits a decomposition of the form
\begin{equation*}
U_{h}(n)=\mathcal{C}_{h}(n)+\mathcal{L}_{h}(n),
\end{equation*}
where $\mathcal{C}_{h}(n):\mathbb{B}\rightarrow X_{0}$ satisfies
\begin{equation}\label{eq5.1}
\|\mathcal{L}_{h}(n)\xi_{1}-\mathcal{L}_{h}(n)\xi_{2}\|_{X_{0}}\leq\kappa\|\xi_{1}-\xi_{2}\|_{X_{0}},\ \ \ \forall~\xi_{1},~\xi_{2}\in\mathbb{B}
\end{equation}
for some $\kappa<\frac{1}{2}$ and $\mathcal{C}_{h}(n):\mathbb{B}\rightarrow X_{1}$ satisfies
\begin{equation}\label{eq5.2}
\|\mathcal{C}_{h}(n)\xi_{1}-\mathcal{C}_{h}(n)\xi_{2}\|_{X_{1}}\leq L\|\xi_{1}-\xi_{2}\|_{X_{0}},\ \ \ \forall~\xi_{1},~\xi_{2}\in\mathbb{B}
\end{equation}
for some $L>0$.
\end{itemize}
Then, there exists a family of time-dependent sets $\mathcal{M}_{h}(n)$, $n\in\mathbb{Z}$, which satisfy
\begin{itemize}
\item[$(1)$] $\mathcal{M}_{h}(n)\subset\mathbb{B}$ for every $n\in\mathbb{Z}$ and their fractal dimension is uniformly bounded
\begin{equation}\label{eq5.3}
dim_{F}(\mathcal{M}_{h}(n);X_{0})\leq C_{1},
\end{equation}
where the constant $C_{1}$ is independent of $n\in\mathbb{Z}$;
\item[$(2)$] the uniform exponential attraction property:
\begin{equation}\label{eq5.4}
dist_{X_{0}}(U_{h}(n+k,n)\mathbb{B},\mathcal{M}_{h}(n+k))\leq C_{2}e^{-\sigma k},
\end{equation}
where the positive constants $C_{2}$ and $\sigma$ are independent of $n\in\mathbb{Z}$ and $k\in\mathbb{N}$.
\end{itemize}
\end{lemma}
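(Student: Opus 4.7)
The plan is to adapt the discrete non-autonomous exponential attractor construction of Efendiev-Miranville-Zelik \cite{EMZ00,EMZ05} to the present symbol-dependent setting, taking care that every quantitative bound depends only on $\kappa$, $L$, the compact embedding $X_1\hookrightarrow X_0$ and the $X_0$-diameter of $\mathbb{B}$, so that uniformity in $h\in\Sigma$ and $n\in\mathbb{Z}$ is automatic.

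The main technical step is a \emph{one-step smoothing cover}: for every $\xi\in\mathbb{B}$, $r>0$, $h\in\Sigma$ and $n\in\mathbb{Z}$, the image $U_{h}(n)(\bar B_{X_0}(\xi,r)\cap\mathbb{B})$ admits a cover by at most $N$ closed $X_0$-balls of radius $\lambda r$, where $N\in\mathbb{N}$ and $\lambda\in(0,1)$ are independent of $h,n,\xi,r$. Indeed, by \eqref{eq5.1} the set $\mathcal{L}_{h}(n)(\bar B_{X_0}(\xi,r)\cap\mathbb{B})$ sits inside a single $X_0$-ball of radius $\kappa r$, while by \eqref{eq5.2} the set $\mathcal{C}_{h}(n)(\bar B_{X_0}(\xi,r)\cap\mathbb{B})$ sits inside an $X_1$-ball of radius $Lr$ which, by the compactness of $X_1\hookrightarrow X_0$, is in turn covered by $N(\theta)$ $X_0$-balls of radius $\theta r$ (for any prescribed $\theta>0$, with $N(\theta)$ independent of $r$). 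Summing the two components and choosing $\theta$ so that $\lambda:=\kappa+\theta<\tfrac12$ (possible because $\kappa<\tfrac12$) gives the cover.

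I would then iterate. Set $R_0:=\mathrm{diam}_{X_0}(\mathbb{B})$ and, for each $h\in\Sigma$ and $n\in\mathbb{Z}$, construct inductively finite sets $E_h^{k}(n)\subset U_{h}(n,n-k)\mathbb{B}$ that are $R_0\lambda^{k}$-nets for $U_{h}(n,n-k)\mathbb{B}$ in $X_0$, with $\#E_h^{k}(n)\leq N^{k}$: start from a singleton $E_h^{0}(n)=\{\xi_0\}$ and, given $E_h^{k}(n-1)$, apply the one-step cover to $U_{h}(n,n-1)(\bar B_{X_0}(\xi,R_0\lambda^{k})\cap\mathbb{B})$ for each $\xi\in E_h^{k}(n-1)$, collecting the new centers. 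Hypothesis (a) keeps these centers inside $\mathbb{B}$. Define
$$\mathcal{M}_{h}(n):=\bigcup_{k\geq 0}E_h^{k}(n)\subset\mathbb{B}.$$

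Both conclusions then follow easily. Property \eqref{eq5.4} is just the net condition, $\dist_{X_0}(U_{h}(n+k,n)\mathbb{B},\mathcal{M}_{h}(n+k))\leq R_0\lambda^{k}$, giving $\sigma=\log(1/\lambda)$ and $C_2=R_0$. For \eqref{eq5.3}, pick $k_0$ with $R_0\lambda^{k_0+1}<\varepsilon\leq R_0\lambda^{k_0}$; the finite piece $\bigcup_{k\leq k_0}E_h^{k}(n)$ has at most $2N^{k_0}$ elements, so can be covered by that many $\varepsilon$-balls, while every $E_h^{k}(n)$ with $k>k_0$ is contained in $U_{h}(n,n-k_0)\mathbb{B}$, which is already $R_0\lambda^{k_0}\leq\varepsilon$-close to $E_h^{k_0}(n)$, so no extra balls are needed. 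Letting $\varepsilon\to 0$ yields $\dim_F(\mathcal{M}_{h}(n);X_0)\leq\log N/\log(1/\lambda)=:C_1$, independent of $h$ and $n$. The only genuinely delicate point is the one-step smoothing cover; once it is established, all constants are manifestly uniform in $(h,n)$ because $\kappa$, $L$ and the embedding $X_1\hookrightarrow X_0$ do not see the symbol or the time.
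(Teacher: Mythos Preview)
Your approach is essentially the one the paper follows (both adapt the Efendiev--Miranville--Zelik construction \cite{EMZ00,EMZ05}): a one-step smoothing cover combining the contraction \eqref{eq5.1} with the compact smoothing \eqref{eq5.2}, iterated to produce geometric $R_0\lambda^k$-nets $E_h^k(n)$ of $U_h(n,n-k)\mathbb{B}$ with $\#E_h^k(n)\le N^k$, and then $\mathcal{M}_h(n)=\bigcup_k E_h^k(n)$. The paper additionally augments the nets by forward images (its sets $E_k(n)=V_k(n)\cup U_h(n-1)E_{k-1}(n-1)$), but since the lemma as stated does not ask for semi-invariance, your leaner union already suffices for conclusions (1) and (2).

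Two details need cleaning up. First, ``Hypothesis~(a) keeps these centers inside $\mathbb{B}$'' is not quite right: the centers produced by the abstract covering of an $X_1$-ball need not lie in $U_h(n)\mathbb{B}$. You must re-center into the image (replace each center by a point of the image it actually covers and double the radius), exactly as the paper does; after doubling you need $2\lambda<1$, which is still achievable since $\kappa<\tfrac12$. Second, in your fractal-dimension step the inequality is reversed: with your choice $R_0\lambda^{k_0+1}<\varepsilon\le R_0\lambda^{k_0}$ you have $R_0\lambda^{k_0}\ge\varepsilon$, so $E_h^{k_0}(n)$ is \emph{not} an $\varepsilon$-net of $U_h(n,n-k_0)\mathbb{B}$. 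Use instead that for $k>k_0$ one has $E_h^k(n)\subset U_h(n,n-k)\mathbb{B}\subset U_h(n,n-k_0-1)\mathbb{B}$ (by (a)), and $E_h^{k_0+1}(n)$ is an $R_0\lambda^{k_0+1}<\varepsilon$ net of the latter; this costs at most $N^{k_0+1}$ extra balls and leaves the dimension bound $\log N/\log(1/\lambda)$ unchanged.
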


\begin{proof}
The proof is similar to \cite{EMZ00,EMZ05}. For the convenience of the reader, we present the details
below.
Since $\mathbb{B}$ is bounded in $X_{0}$, there exists a ball $B(x_{0},R;X_{0})$ of radius $R$ centered at $x_{0}\in\mathbb{B}$ in $X_{0}$ such that $\mathbb{B}\subset B(x_{0},R;X_{0})$. We set $V_{0}:=\{x_{0}\}\subset\mathbb{B}$. We also fix an arbitrary dynamical process $U_{h}$ satisfying the above assumptions~$(a)$ and $(b)$.

Now we construct a family of the sets $V_{k}(n)\subset\mathbb{B}$ by induction with respect to $k$ such that $V_{k}(n)$ is an $R_{k}:=R(\kappa+\frac{1}{2})^{k}$-net of $U_{h}(n,n-k)\mathbb{B}$. Assume that the required sets are already been constructed for some $k=l$, then we construct the next set $V_{l+1}(n+1)$ preserving these properties.

It follows from \eqref{eq5.2} that
\begin{equation*}
\mathcal{C}_{h}(n)(U_{h}(n,n-l)\mathbb{B})\subset \cup_{x\in V_{l}(n)}B(\mathcal{C}_{h}(n)x,LR_{l};X_{1}).
\end{equation*}
Since the embedding $X_{1}\subset X_{0}$ is compact, we cover this ball by a finite number of $\frac{1-2\kappa}{4}R_l$-balls in $X_0$ with centers $y_i$ and the minimal number of balls in this covering can be estimated as follows:
$$
N_{\frac{1-2\kappa}{4}R_l}(B(\mathcal{C}_{h}(n)x,LR_{l};X_{1});X_0)
%=N_{\frac{1-2\kappa}{4}R_l}(B(0,LR_{l};X_{1});X_0)
=N_{\frac{1-2\kappa}{4L}}(B(0,1;X_{1});X_0):=N.
$$
Crucial for us that the number $N$ is independent of $l$, $R$, $\kappa$ and $L$. It follows from \eqref{eq5.1} that the family of balls with centers $\mathcal{L}_{h}(n)x$, $x\in V_l(n)$ and with
radius $\kappa R_l$ covers $\mathcal{L}_{h}(n)(U_{h}(n,n-l)\mathbb{B})$. %we fix a model covering of the unit ball $B(0,1;X_{1})$ in the space $X_{1}$ by a minimal number of $\frac{1-2\kappa}{4L}$-balls in the space $X_{0}$. Let $\mathbb{U}=\{u_{1},\cdots,u_{N}\}\subset B(0,1;X_{1})$ be the centres of these balls. Then, we cover every ball $B(\mathcal{L}_{h}(n)x,LR_{l};X_{1})$ with $x\in V_{l}(n)$ by $N$ (this number is independent of $l$) balls of radius $\frac{1-2\kappa}{4}R_{l}$ in the space $X_{0}$, defining the centres $V_{l}'(x)$ of these balls by $V_{l}'(x):=\mathcal{L}_{h}(n)x+LR_{l}\mathbb{U}$ with $x\in V_{l}(n)$. Thus,
%\begin{equation*}
%\mathcal{L}_{h}(n)(U_{h}(n,n-l)\mathbb{B})\subset\cup_{x\in V_{l}(n)}B(V_{l}'(x),\frac{1-2\kappa}{4}R_{l};X_{0}).
%\end{equation*}
%Note that estimate \eqref{eq5.1} implies that
%\begin{equation*}
%\mathcal{C}_{h}(n)(U_{h}(n,n-l)\mathbb{B})\subset\cup_{x\in V_{l}(n)}B(\mathcal{C}_{h}(n)x,\kappa R_{l};X_{0}).
%\end{equation*}
Consequently, we conclude that
\begin{equation*}
\begin{split}
U_{h}(n+1,n-l)\mathbb{B}&=U_{h}(n)U_{h}(n,n-l)\mathbb{B}\\
&\subset\cup_{x\in V_{l}(n)}B(y_i+\mathcal{L}_{h}(n)x,\frac{1-2\kappa}{4} R_{l}+\kappa R_{l};X_{0})\\
& =\cup_{x\in V_{l}(n)}B(y_i+\mathcal{L}_{h}(n)x,\frac{1+2\kappa}{4} R_{l};X_{0}),
\end{split}
\end{equation*}
and the number of balls in this system is not greater than $N\cdot\sharp V_{l}(n)$.

Increasing the radius of every ball in this covering by a factor of two, we can assume that the centres of this covering belong to $U_{h}(n+1,n-l)\mathbb{B}$. We denote by $V_{l+1}(n+1)$ the new centres of this covering and condition $(a)$ for $U_{h}(n)$ guarantees that $V_{l+1}(n+1)\subset\mathbb{B}$, we also note that $R_{l+1}=\frac{1+2\kappa}{2}R_{l}=R(\kappa+\frac{1}{2})^{l+1}$. Thus, the required sets $V_{k}(n)$ are constructed for every $n\in\mathbb{Z}$ and $k\in\mathbb{N}$.

According to the above construction, we have
\begin{equation}\label{eq5.5}
\begin{split}
&\sharp V_{k}(n)\leq N^{k},\ \ k\in\mathbb{N},\ n\in\mathbb{Z},\\
&\mathrm{dist}_{X_{0}}(U_{h}(n,n-k)\mathbb{B},V_{k}(n))\leq R(\kappa+\frac{1}{2})^{k}.
\end{split}
\end{equation}
We define the sets $E_{k}(n)=E_{k}^{h}(n)$ by
\begin{equation}\label{eq5.6}
E_{1}(n):=V_{1}(n),\ \ E_{k+1}(n+1):=V_{k+1}(n+1)\cup U_{h}(n)E_{k}(n), \ \ k\in\mathbb{N},\ n\in\mathbb{Z}.
\end{equation}
From \eqref{eq5.5} and \eqref{eq5.6}, we have
\begin{equation}\label{eq5.7}
\begin{split}
&\sharp E_{k}(n)\leq kN^{k},\\
&\mathrm{dist}_{X_{0}}(U_{h}(n,n-k)\mathbb{B},E_{k}(n))\leq R(\kappa+\frac{1}{2})^{k}.
\end{split}
\end{equation}
The required family of sets $\mathcal{M}_{h}(n)$ can be defined by
\begin{equation}\label{eq5.8}
\mathcal{M}_{h}(n):=\cup_{k=1}^{\infty}E_{k}(n), \ \ n\in\mathbb{Z}.
\end{equation}

Let us verify that $\mathcal{M}_{h}(n)$ defined by \eqref{eq5.8} satisfies \eqref{eq5.3} and \eqref{eq5.4}.

Indeed, since $E_{k}(n)\subset\mathcal{M}_{h}(n)$ and $\kappa+\frac{1}{2}<1$, the uniform exponential attraction property follows from $\eqref{eq5.7}$:
\begin{equation*}
\mathrm{dist}_{X_{0}}(U_{h}(n,n-k)\mathbb{B},\mathcal{M}_{h}(n))\leq R(\kappa+\frac{1}{2})^{k}=Re^{-k\ln\frac{1}{\kappa+\frac{1}{2}}}.
\end{equation*}

Now, there remains to verify the finite dimensionality of $\mathcal{M}_{h}(n)$.
We fix $\varepsilon>0$ and choose the smallest integer $k=k(\varepsilon)$ such that $R(\kappa+\frac{1}{2})^{k}\leq\varepsilon$. Since, by definition \eqref{eq5.6},
\begin{equation*}
E_{k}(n)=\cup_{l=0}^{k-1}U_{h}(n,n-l)V_{k-l}(n-l)
\end{equation*}
and from the construction of $V_{k}(n)$, we have $V_{k-l}(n-l)\subset U_{h}(n-l,n-k)\mathbb{B}$. Then, by \eqref{eq5.5} we obtain
\begin{equation*}
\begin{split}
\cup_{k\geq k(\varepsilon)}E_{k}(n)&\subset\cup_{k\geq k(\varepsilon)}\cup_{l=0}^{k-1}U_{h}(n,n-l)U_{h}(n-l,n-k)\mathbb{B}\\
& =\cup_{k\geq k(\varepsilon)}U_{h}(n,n-k)\mathbb{B}\\
&\subset\cup_{v\in V_{k}(n)}B(v,\varepsilon;X_{0}).
\end{split}
\end{equation*}
Thus,
\begin{equation*}
\begin{split}
N_{\varepsilon}(\mathcal{M}_{h}(n);X_{0})&\leq
N_{\varepsilon}(\cup_{k\le k(\varepsilon)}E_{k}(n))+N_{\varepsilon}(\cup_{k> k(\varepsilon)}E_{k}(n))\\
&\leq\sum_{k\le k(\varepsilon)}\sharp E_{k}(n)+\sharp V_{k(\eb)+1}(n)\\
& \leq(k(\varepsilon)+1)^{2} N^{k(\varepsilon)+1}.
\end{split}
\end{equation*}
Consequently,
\begin{equation*}
\mathrm{dim}_{F}(\mathcal{M}_{h}(n);X_{0}):=\limsup\limits_{\varepsilon\rightarrow0^{+}} \frac{\log_{2}N_{\varepsilon}(\mathcal{M}_{h}(n);X_{0})}{\log_{2}\frac{1}{\varepsilon}}\leq(\log_{2}\frac{1}{\kappa+\frac{1}{2}})^{-1}\log_{2}N.
\end{equation*}
\end{proof}

Then, we will apply Lemma \ref{lem4.1} to problem \eqref{eq1.1}.
\begin{lemma}\label{lem5.2}
Let assumptions \eqref{eq1.2}-\eqref{eq1.5} hold. Then, there exists some $T>0$ such that, for any arbitrary fixed $h\in\Sigma$ and $\tau\in\mathbb{R}$, the family of discrete dynamical processes $U_{h}^{\tau}(m,n):=U_{h}(\tau+mT,\tau+nT)$, $m,n\in\mathbb{Z},m\geq n$ possesses a family sets $\mathcal{M}_{h}^{\tau}(n)$, $n\in\mathbb{Z}$ which satisfies
\begin{equation}\label{eq5.9}
dist_{\mathcal{E}}(U_{h}^{\tau}(m,n)\Cal B_1,\mathcal{M}_{h}^{\tau}(m))\leq \nu e^{-\sigma(m-n)T},
\end{equation}
where the positive constants $\nu$ and $\sigma$ are independent of $m,n\in\mathbb{Z}$, $m\geq n$ and $\tau\in\mathbb{R}$. And for every $\delta>0$, there exists $\varepsilon_{0}>0$ such that
\begin{equation}\label{eq5.10}
N_{\varepsilon}(\mathcal{M}_{h}^{\tau}(n);\mathcal{E})\leq (\frac{1}{\varepsilon})^{\mathcal{N}+\delta}, \ \ \text{for all}~0<\varepsilon\leq\varepsilon_{0},
\end{equation}
where the positive constant $\mathcal{N}$ is independent of $n\in\mathbb{Z}$ and $\tau\in\mathbb{R}$.
\end{lemma}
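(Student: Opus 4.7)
The plan is to apply the abstract Lemma \ref{lem4.1} with the choices $X_0:=\Cal E$, $X_1:=\Cal E^{\alpha}$ for a fixed $\alpha\in(0,1]$ (so that the embedding $X_1\hookrightarrow X_0$ is compact), $\mathbb B:=\Cal B_1$, and the discrete process $U_h(n):=U_h^\tau(n+1,n)=U_h(\tau+(n+1)T,\tau+nT)$. First I would verify condition $(a)$: by Corollary \ref{compabsorb} applied with $\|\xi_\tau\|_{\Cal E^1}\le R$ and $R$ chosen as in Proposition \ref{compattr}, one has $\|\xi_u(t)\|_{\Cal E^1}\le \tfrac12 R+\tfrac12 R=R$ provided $t-\tau\ge T_0$ with $T_0$ depending only on $R$, $\beta$, and $\|g\|_W$. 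Thus the period $T$ will be fixed once and for all as any number $T\ge T_0$ large enough (to be enlarged later), and $\Cal B_1$ is then invariant for $U_h^\tau(n+1,n)$ uniformly in $h\in\Sigma$, $n\in\mathbb Z$, $\tau\in\R$.

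Next I would build the decomposition required by $(b)$. Given $\xi\in\Cal B_1$ prescribed at time $s_n:=\tau+nT$, decompose the corresponding Shatah--Struwe solution as $u=v+w$ where
\[
\Dt^2 v+\gamma\Dt v-\Dx v=0,\qquad \xi_v|_{t=s_n}=\xi,
\]
\[
\Dt^2 w+\gamma\Dt w-\Dx w+f(u)=h(t),\qquad \xi_w|_{t=s_n}=0,
\]
and set $\LL_h^\tau(n)\xi:=\xi_v(s_{n+1})$, $\C_h^\tau(n)\xi:=\xi_w(s_{n+1})$. The contraction estimate \eqref{eq5.1} is then immediate: since $v_1-v_2$ solves the same homogeneous linear equation with initial datum $\xi_1-\xi_2$, the energy estimate \eqref{linenergyest} with $G=0$ yields
\[
\|\LL_h^\tau(n)\xi_1-\LL_h^\tau(n)\xi_2\|_{\Cal E}\le Ce^{-\beta T}\|\xi_1-\xi_2\|_{\Cal E},
\]
so enlarging $T$ makes the Lipschitz constant $\kappa:=Ce^{-\beta T}<1/2$.

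The smoothing estimate \eqref{eq5.2} for $\C_h^\tau(n)$ is the main technical step. The difference $w:=w_1-w_2$ solves the inhomogeneous linear wave equation with zero datum and right-hand side $-[f(u_1)-f(u_2)]=-\int_0^1 f'(\lambda u_1+(1-\lambda)u_2)\,d\lambda\cdot(u_1-u_2)$. Since $\xi_i\in\Cal B_1\subset\Cal E^1$, the dissipative estimate \eqref{eq2.2} together with the higher-regularity bound \eqref{reg} gives uniform control of $u_i$ in $L^\infty(s_n,s_{n+1};\Cal E^1)\cap L^4(s_n,s_{n+1};L^{12})$ (and in $L^\infty(s_n,s_{n+1};H^{1,\frac{6}{3-2\alpha_0}})$ for a suitable $\alpha_0>0$). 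Combining the growth condition \eqref{eq1.2} with H\"older's inequality, exactly as in the chain of estimates leading to \eqref{fest} and \eqref{westi1} in Proposition \ref{compattr}, I can bound
\[
\|f(u_1)-f(u_2)\|_{L^1(s_n,s_{n+1};H^{\alpha})}\le L_0\,\|\xi_1-\xi_2\|_{\Cal E},
\]
for some $\alpha>0$ determined by $p<5$ via the Sobolev embedding $W^{1,\kappa}\subset H^{\alpha}$ with $\tfrac1\kappa=\tfrac12+\tfrac{p-1}{12}$. Plugging into \eqref{linenergyest} with this $\alpha$ then yields
\[
\|\C_h^\tau(n)\xi_1-\C_h^\tau(n)\xi_2\|_{\Cal E^{\alpha}}\le L\,\|\xi_1-\xi_2\|_{\Cal E},
\]
with $L$ depending only on the bounds of $\Cal B_1$ and the parameters of the equation, uniformly in $h,\tau,n$.

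With $(a)$ and $(b)$ verified, Lemma \ref{lem4.1} produces the family $\M_h^\tau(n)\subset\Cal B_1$ with $\dim_F(\M_h^\tau(n);\Cal E)\le C_1=:\NN$ and the uniform exponential attraction $\dist_{\Cal E}(U_h^\tau(m,n)\Cal B_1,\M_h^\tau(m))\le C_2 e^{-\sigma(m-n)}$, both with constants independent of $h,\tau,n,m$; after absorbing the constant $T$ into the decay rate this gives precisely \eqref{eq5.9}. Finally, \eqref{eq5.10} is a direct consequence of the finite fractal dimension estimate: by Definition of $\dim_F$, for every $\delta>0$ there is $\eb_0>0$ such that $\log_2 N_\eb(\M_h^\tau(n);\Cal E)\le(\NN+\delta)\log_2(1/\eb)$ for all $\eb\le\eb_0$, and the uniformity in $n,\tau$ follows because the constants $N,R,\kappa,L$ entering the construction in Lemma \ref{lem4.1} depend only on $\Cal B_1$ and the equation. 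The main obstacle is the smoothing bound for the difference, which forces one to re-run the sup-cubic bootstrap of Section \ref{s4} at the level of differences rather than single solutions; once this is granted, the rest is bookkeeping.
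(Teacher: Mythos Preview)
Your plan is correct and follows the paper's strategy: reduce to $\tau=0$ via the translation identity, split the difference of two trajectories from $\Cal B_1$ into a linear decaying part and a smoothing remainder, choose $T$ so that the linear part contracts with factor $<1/2$, and invoke Lemma~\ref{lem4.1}. The only noteworthy divergence is the choice of the smoothing space and the way the Lipschitz--smoothing bound \eqref{eq5.2} is obtained.

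The paper takes $X_1=\Cal E^{1}$ rather than $\Cal E^{\alpha}$. Because $\xi_i\in\Cal B_1$ and Corollary~\ref{compabsorb} propagates the $\Cal E^1$ bound, one has $u_i\in L^\infty_t H^2_x\hookrightarrow L^\infty_{t,x}$, so $l(t)=\int_0^1 f'(s u_1+(1-s)u_2)\,ds$ is bounded and $\|l(t)\theta\|_{H^1_0}\le C\|\nabla\theta\|_{L^2}$ directly, without re-running the sub-quintic Strichartz bootstrap. This, combined with the Gronwall estimate $\|\xi_\theta(t)\|_{\Cal E}\le Ce^{Kt}\|\xi_0^1-\xi_0^2\|_{\Cal E}$ (which you will also need but did not write out), gives $\|\xi_w(T)\|_{\Cal E^1}\le Ce^{KT}\|\xi_0^1-\xi_0^2\|_{\Cal E}$ in one line. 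Your route through $\Cal E^{\alpha}$ and the $W^{1,\kappa}\hookrightarrow H^{\alpha}$ embedding is workable, but note that estimating $\|f(u_1)-f(u_2)\|_{W^{1,\kappa}}$ requires handling $\nabla l\cdot\theta$, i.e.\ a term involving $f''$, so the ``exactly as in \eqref{fest}'' is a slight overstatement; the paper's $\Cal E^1$ choice sidesteps this by exploiting the $L^\infty$ control already available on $\Cal B_1$.
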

\begin{proof}
Thanks to the translation identity \eqref{trans}, we know that for any fixed $\tau\in\R$ and $h\in\Sigma$, we can find $h'\in\Sigma$ such that
$$
U_{h'}(t+\tau,\tau)x=U_h(t,0)x, \ \ \forall~t\geq0, \ x\in \Cal E.
$$
Therefore, we set $\tau=0$ for simplicity.

Let $u_1(t)$ and $u_2(t)$ be two solutions of \eqref{eq1.1} with different initial data $\xi_0^{1},\xi_0^{2}\in\Cal B_1$ starting at $t=0$ and with the same external force $h\in\Sigma$. Let $\theta(t)=u_1(t)-u_2(t)$, then this function solves
\begin{equation}\label{dif}
\Dt^2\theta+\gamma\Dt\theta-\Dx\theta+l(t)\theta=0, \ \xi_{\theta}\big|_{t=0}=\xi_{0}^{1}-\xi_{0}^{2},
\end{equation}
%$$
where $l(t):=\int_0^1f'(su_1(t)+(1-s)u_2(t))\,ds$.
We split the solution $\theta(t)=v(t)+w(t)$, where $v(t)$ satisfies the equation
\begin{equation}\label{eq3.2}
\partial_{t}^{2}v+\gamma\partial_{t}v-\Delta v=0,\ \  v\big|_{\partial\Omega}=0,\ \ \xi_{v}\big|_{t=0}=\xi_0^{1}-\xi_0^{2}
\end{equation}
and the function $w(t)$ satisfies
\begin{equation}\label{eq3.3}
\partial_{t}^{2}w+\gamma\partial_{t}w-\Delta w+l(t)\theta=0,\ \ w\big|_{\partial\Omega}=0, \ \ \xi_{w}\big|_{t=0}=0.
\end{equation}

For the linear equation \eqref{eq3.2}, we get
\begin{equation}\label{eq3.4}
\|\xi_{v}(t)\|_{\mathcal{E}}\leq C\|\xi_{0}^{1}-\xi_{0}^{2}\|_{\mathcal{E}} e^{-\beta t},
\end{equation}
where the positive constants $C$ and $\beta$ are independent of $t$, $g$ and $\xi_0^{1},\xi_{0}^{2}$. %By setting $T_1:=\frac{1}{\beta}\ln(4C)$, we have
%\begin{equation}\label{dif1est}
%\|\xi_v(t)\|_{\Cal E}\le \frac{1}{4}\|\xi_{0}^{1}-\xi_{0}^{2}\|_{\Cal E}, \ \ \forall t\geq T_1.
%\end{equation}
%Next, we consider equation \eqref{eq3.3}.
From Corollary \ref{compabsorb}, we know that
$$
\|\xi_{u_i}(t)\|_{\Cal E^{1}}\le C, \ \ \forall t\geq\tau, \ i=1,2,
$$
where the constant $C$ is independent of $t$, $u_i$ and $h$. Using the embedding $H^{2}\subset C$, we find that
\begin{equation}\label{difest}
\|\xi_w(t)\|_{\Cal E^{1}}\le C\int_0^{t}e^{-\beta(t-s)}\|l(t)\theta\|_{H_0^{1}}ds\le C\|\nabla \theta\|_{L^{\infty}(0,t;L^{2})}\le C\|\xi_\theta\|_{L^{\infty}(0,t;\Cal E)}.%\le Ce^{Kt}\|\xi_0^{1}-\xi_0^{2}\|_{\Cal E},
\end{equation}
Next, we estimate the term $\|\xi_{\theta}\|_{L^{\infty}(0,t;\mathcal{E})}$. Multiplying \eqref{dif} by $\partial_{t}\theta$ and integrating over $\Omega$, we end up with
\begin{equation*}
\frac{d}{dt}\|\xi_{\theta}(t)\|_{\mathcal{E}}^{2}+2\gamma\|\partial_{t}\theta\|^{2}=2(l(t)\theta,\partial_{t}\theta).
\end{equation*}
Using the growth condition \eqref{eq1.2}, we get
\begin{equation*}\label{*1}
\begin{split}
2|(l(t)\theta,\partial_{t}\theta)|&\leq C((1+|u_{1}|^{p-1}+|u_{2}|^{p-1})|\theta|, |\partial_{t}\theta|)\\
& \leq C\|\partial_{t}\theta\|_{L^{2}}\|\theta\|_{L^{6}}(1+\|u_{1}\|_{L^{12}}^{4}+\|u_{2}\|_{L^{12}}^{4})\\
& \leq C\|\xi_{\theta}\|_{\mathcal{E}}^{2}(1+\|u_{1}\|_{L^{12}}^{4}+\|u_{2}\|_{L^{12}}^{4}),
\end{split}
\end{equation*}
%here $c_{2}$, $c_{3}$ and $C_{0}$ depend only on $p$ and $c_{1}$ in \eqref{eq1.2}.
By applying the Gronwall inequality, we obtain
$$
\|\xi_{\theta}(t)\|_{\mathcal{E}}\leq Ce^{Kt}\|\xi_0^{1}-\xi_0^{2}\|_{\mathcal{E}}.
$$
Inserting this into estimate \eqref{difest}, we have
$$
\|\xi_w(t)\|_{\Cal E^{1}}\le Ce^{Kt}\|\xi_0^{1}-\xi_0^{2}\|_{\mathcal{E}},
$$
where the positive constants $C$ and $K$ are independent of $t$, $h$, $\xi_0^{1}$ and $\xi_0^{2}$.

Taking $T_1>0$ such that $T_1=\frac{1}{\beta}\ln(4C)$ and fixing
\begin{equation}\label{T}
T:=\max\{T_0,T_1\},
\end{equation}
then the relation
\begin{equation}\label{embed1}
\cup_{h\in\Sigma}U_h(t,0)\Cal A_\Sigma\subset\cup_{h\in\Sigma}U_h(t,0)\Cal B_1\subset\Cal B_1, \ \ t\geq T
\end{equation}
together with estimates
\begin{equation*}\label{vdecay}
\|\xi_{v}(T)\|_{\mathcal{E}}\leq \frac{1}{4}\|\xi_{0}^{1}-\xi_{0}^{2}\|_{\mathcal{E}}
\end{equation*}
and
\begin{equation*}\label{wsmooth}
\|\xi_w(T)\|_{\Cal E^{1}}\le Ce^{KT}\|\xi_0^{1}-\xi_0^{2}\|_{\mathcal{E}}
\end{equation*}
implies that $U_{h}(T,0)$ satisfies the assumptions of Lemma \ref{lem4.1} with $\kappa=\frac{1}{4}$ and $L=Ce^{KT}$.
Thus, we can apply Lemma \ref{lem4.1} to the family of discrete dynamical processes $U_{h}^{0}(m,n)$, $m,n\in\mathbb{Z}$, $m\geq n$, which implies that these processes possess a family of sets $\mathcal{M}_{h}^{0}(n)$, $n\in\mathbb{Z}$ satisfies \eqref{eq5.9} and \eqref{eq5.10} with $\tau=0$.
\end{proof}
\begin{remark}
We have the following translation invariance:
\begin{equation}\label{eq5.18}
\mathcal{M}_{T(s)h}^{0}(l)=\mathcal{M}_{h}^{s}(l)
\end{equation}
for all $l\in\mathbb{Z}$ and $s\in\mathbb{R}$. %, where $\{T(s),s\in\mathbb{R}\}$ be the translation operators.
Thus, the estimates \eqref{eq5.9} and \eqref{eq5.10} are independent of the specific choice of $h\in\Sigma_{0}$.
\end{remark}

\section{Estimates of the $\varepsilon$-entropy}
First, we will obtain a weak partition of $\Sigma_{iT,(i+1)T}$, $i\in\N^{+}$.

let $u_1(t)$ and $u_2(t)$ be two solutions of \eqref{eq2.3}
staring from the same initial data $\xi\in\Cal A_\Sigma$ at $t=0$, but with different external forces $h,h'\in\Sigma$. Then the function $\theta(t)=u_1(t)-u_2(t)$ satisfies
\begin{equation}\label{dif1}
\Dt^{2}\theta+\gamma\Dt\theta-\Delta \theta+l(t)\theta=h-h', \ \ \xi_\theta\big|_{t=0}=0.
\end{equation}
Then we have the following result which is immediately from \eqref{embed1}.
\begin{corollary}\label{difreg}
Let the assumptions of Proposition \ref{compattr} hold. %, and let $u_1(t)$ and $u_2(t)$ are solutions of equation \eqref{eq2.3} staring from the same initial data $\xi\in\Cal A_\Sigma$ at $t=0$, but with different external forces $h,h'\in\Sigma$. Let $\theta(t)=u_1(t)-u_2(t)$.
Then the second ingredient $\Dt \theta$ of the solution $\xi_\theta$ for equation \eqref{dif1} satisfying that
$$
\Dt \theta\in L_b^{\infty}(T,+\infty;H_0^{1})\cap H_b^{1}(T,+\infty;H^{-1})
$$
%where $\R_{T_0}:=[T_0,+\infty)$
and
\begin{equation}\label{dif2reg}
\|\Dt \theta\|_{L_b^{\infty}(T,+\infty;H_0^{1})\cap H_b^{1}(T,+\infty;H^{-1})}\le r,
\end{equation}
where $T$ is defined in \eqref{T} and the positive constant $r$ only depends on $\|g\|_W$.
\end{corollary}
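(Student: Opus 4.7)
The plan is to read \eqref{dif2reg} off the enhanced smoothness of the attractor that was already established in Section~\ref{s4}, so essentially no new analytic work is required. The key input is that the initial data $\xi\in\Cal A_\Sigma\subset\Cal B_1$, together with the absorption \eqref{embed1}, places both solutions in $\Cal B_1$ for every $t\ge T$. Hence
$$
\sup_{t\ge T}\bigl(\|u_i(t)\|_{H^{2}\cap H^{1}_0}+\|\partial_t u_i(t)\|_{H^{1}_0}\bigr)\le C(\|g\|_W),\qquad i=1,2,
$$
and taking differences immediately gives $\partial_t\theta=\partial_t u_1-\partial_t u_2\in L^\infty_b(T,+\infty;H^{1}_0)$ with the required bound, which already controls the first half of the norm in \eqref{dif2reg}.

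For the $H^{1}_b(T,+\infty;H^{-1})$ half it is enough to bound $\partial_{tt}\theta$ in $L^2_b(T,+\infty;H^{-1})$, and I would read $\partial_{tt}\theta$ directly from \eqref{dif1} as
$$
\partial_{tt}\theta=-\gamma\,\partial_t\theta+\Delta\theta-l(t)\theta+(h-h'),
$$
and then control each of the four pieces in $H^{-1}$ uniformly in $t\ge T$. The first piece is absorbed into the $L^\infty(H^{1}_0)\hookrightarrow L^\infty(H^{-1})$ bound obtained above. The Laplacian term satisfies $\|\Delta\theta(t)\|_{L^{2}}\le\|u_1(t)\|_{H^{2}}+\|u_2(t)\|_{H^{2}}\le C(\|g\|_W)$ pointwise in~$t$, so lies in $L^\infty(H^{-1})$. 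The forcing term is handled by $\|h-h'\|_{L^{2}_b(\R;L^{2})}\le 2\|g\|_{L^{2}_b(\R;L^{2})}\le 2\|g\|_W$ and the continuous embedding $L^{2}\hookrightarrow H^{-1}$.

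The only slightly delicate piece is the semilinear remainder $l(t)\theta$ with $l(t)=\int_0^1 f'(su_1+(1-s)u_2)\,ds$. Here I would invoke the sup-cubic growth \eqref{eq1.2} together with the three-dimensional Sobolev embedding $H^{2}(\Omega)\hookrightarrow L^{\infty}(\Omega)$: the uniform $H^{2}$-bound on $u_i$ for $t\ge T$ promotes to a uniform $L^{\infty}$-bound, so $|l(t,x)|\le C(\|g\|_W)$ pointwise and consequently $\|l(t)\theta(t)\|_{L^{2}}\le C\|\theta(t)\|_{L^{2}}\le C'(\|g\|_W)$. Summing the four contributions then yields a uniform $L^\infty_b(T,+\infty;H^{-1})$-bound on $\partial_{tt}\theta$, which in particular contains the required $L^{2}_b$-bound, and finishes the proof. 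I do not anticipate any real obstacle in this corollary: once Corollary~\ref{compabsorb} and the invariance \eqref{embed1} are available, everything reduces to the elementary $H^{-1}$-bookkeeping just described, and all constants depend only on $\|g\|_W$ as required.
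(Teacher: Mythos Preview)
Your proposal is correct and follows essentially the same route as the paper: the paper states that the result is ``immediately from \eqref{embed1}'' and then, in the paragraph following the corollary, reads $\partial_t^2\theta\in L^2_b(T,+\infty;H^{-1})$ directly off equation \eqref{dif1}. Your write-up simply fills in the term-by-term $H^{-1}$-bookkeeping that the paper leaves implicit; the use of $H^2\hookrightarrow L^\infty$ to control $l(t)\theta$ already appears in the paper in the proof of \eqref{difest}.
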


Denote the set
\begin{gather*}
\begin{split}
A=
\begin{pmatrix}
A_{1}\\
A_{2}\\
\end{pmatrix}
=\{\xi_{\theta}|\xi_{\theta}(s),~s>0~\text{satisfies}~\eqref{dif1}~\text{with} ~\xi_{\theta}(0)=0,~h,h'\in\Sigma\}.
\end{split}
\end{gather*}
According to Corollary \ref{difreg},
we infer that $A\subset L_{b}^{\infty}(T,+\infty;\mathcal{E}^{1})$. %where $T$ is defined in \eqref{T}.
From equation \eqref{dif1} we know that %which together with $f(u_{2})-f(u_{1})\in C_{b}(T,+\infty;L^{\frac{6}{5}}(\Omega))\subset C_{b}(T,+\infty;H^{-1}(\Omega))$ and $h_{1},h_{2}\in L^{2}_{b}(T,+\infty;L^{2}(\Omega))$, from equation \eqref{eq3.1} we deduce that
$\{\partial_{t}^{2}\theta|\partial_{t}\theta\in A_{2}\}\subset L_{b}^{2}(T,+\infty;H^{-1}(\Omega))$. That is,
$$A_{2}\subset L_{b}^{\infty}(T,+\infty;H_0^{1}(\Omega))\cap H_{b}^{1}(T,+\infty;H^{-1}(\Omega)).$$
%and for $\Dt \theta\in A_2$ we have
%\begin{equation}\label{dif2reg}
%\|\Dt \theta\|_{L_b^{\infty}(T_0,+\infty;H_0^{1})\cap H_b^{1}(T_0,+\infty;H^{-1})}\le r,
%\end{equation}
%where the positive constant $r$ only depends on $\|g\|_W$.
Denote by
\begin{equation}\label{XY}
X:=L^{\infty}(iT,(i+1)T;H_0^{1})\cap H^{1}(iT,(i+1)T;H^{-1}) \ \ \text{and} \ \ Y:=L^{2}(iT,(i+1)T;L^{2}).
\end{equation}
Then for any $i\in\N^{+}$,
$$A_2\big|_{[iT,(i+1)T]}\subset B_r:=\{u\in X;\|u\|_X\le r\} \ \ \text{and} \ \  \Sigma_{[iT,(i+1)T]}\subset B_s:=\{v\in Y;\|v\|_Y\le s\},$$
where the positive constants $r$ given in \eqref{dif2reg} and $s=\|g\|_{L_{b}^{2}(\mathbb{R};L^{2}(\Omega))}$.

Thus, from Section $3$, for any given $\tilde{\varepsilon}>0$, we obtain a $(r,X;\|g\|_{L_{b}^{2}(\mathbb{R};L^{2}(\Omega))},Y;\tilde{\varepsilon})$-weak partition of $\Sigma_{[iT,(i+1)T]}$, $i\in\Bbb N^{+}$. Here we denote by $\Sigma_{[iT,(i+1)T]}\subset\cup_{n\in\Lambda}V_n^{(i)}$, $i\in\N^{+}$ and $N_{\tilde{\eb}}^{w}(r,X;\|g\|_{L_{b}^{2}(\mathbb{R};L^{2}(\Omega))},Y)$ be the minimal number of the cardinality of $\Lambda$. Then, for any given $l\in\N^{+}$, we will construct a special cover of $\Sigma_{T,(l+1)T}$ in the following way such that
\begin{equation}\label{partitionl}
\begin{split}
&\Sigma_{[T,(l+1)T]}\subset\cup_{j=1}^{M}W_j;\\
&\Sigma_{[iT,(i+1)T]}\cap W_j\big|_{[iT,(i+1)T]}\subset\cup_{n\in\Lambda}V_n^{(i)}, \ \ j=1,\cdots, M, \ i=1,\cdots,l.
\end{split}
\end{equation}
By combination the partitions on each subinterval $[iT,(i+1)T]$, $i=1,\cdots,l$, we obtain a cover of set $\Sigma_{[T,(l+1)T]}$, and the upper bound of the number of such cover can be estimated by
\begin{equation}\label{covernum}
M\leq(N_{\tilde{\varepsilon}}^w(r,X;\,\|g\|_{L_{b}^{2}(\mathbb{R};L^{2}(\Omega))},Y))^{l}.
\end{equation}

\begin{lemma}\label{lem5.5}
For every $\varepsilon>0$ and any given $l\in\mathbb{N^{+}}$, for the cover of $\Sigma_{[T,(l+1)T]}$ constructed in \eqref{partitionl}, we have that if $\tilde{\varepsilon}\leq\varepsilon^{2}/32e^{l(K+1)}$, then for any $h,h'\in W_{j}$, $j=1,\cdots,M$,
\begin{equation}\label{eq5.22}
dist_{\mathcal{E}}(\mathcal{K}_{h}((l+1)T),U_{h'}((l+1)T,T)\mathcal{K}_{h}(T))\leq\varepsilon/4.
\end{equation}
%the constant $C$ depends only on the parameters of equation \eqref{eq1.1} and $\|g\|_{L_{b}^{2}(\mathbb{R};L^{2}(\Omega))}$, and
\end{lemma}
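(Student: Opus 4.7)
The plan is to bound the Hausdorff semidistance by comparing, for each $\xi\in\mathcal K_h(T)$, the trajectory $u_1(t):=U_h(t,T)\xi$---which by invariance of the kernel satisfies $u_1((l+1)T)\in\mathcal K_h((l+1)T)$---against $u_2(t):=U_{h'}(t,T)\xi$, which produces an element of $U_{h'}((l+1)T,T)\mathcal K_h(T)$. Since every point of $\mathcal K_h((l+1)T)$ is the value of a complete bounded trajectory, hence of the form $U_h((l+1)T,T)\xi$ for some $\xi\in\mathcal K_h(T)$, it suffices to prove $\|\xi_\theta((l+1)T)\|_{\mathcal E}\le\varepsilon/4$ uniformly in $\xi$, where $\theta:=u_1-u_2$ solves \eqref{dif1} on $[T,(l+1)T]$ with $\xi_\theta(T)=0$ and right-hand side $h-h'$.

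The core is an iterated energy estimate on the subintervals $[iT,(i+1)T]$, $i=1,\dots,l$. Multiplying \eqref{dif1} by $\partial_t\theta$ and using the growth condition \eqref{eq1.2} together with the uniform $L^{12}$-bound of $u_1,u_2$ on $\Cal B_1$ (exactly as in the corresponding step of Lemma \ref{lem5.2}), I obtain
$$\frac{d}{dt}\|\xi_\theta(t)\|_{\mathcal E}^{2}\le K\|\xi_\theta(t)\|_{\mathcal E}^{2}+2\int_\Omega(h-h')\,\partial_t\theta\,dx,$$
with $K$ an absolute constant. The Duhamel inequality across $[iT,(i+1)T]$ then yields
$$\|\xi_\theta((i+1)T)\|_{\mathcal E}^{2}\le e^{KT}\|\xi_\theta(iT)\|_{\mathcal E}^{2}+2\int_{iT}^{(i+1)T}\int_\Omega e^{K((i+1)T-s)}(h-h')\,\partial_t\theta\,dx\,ds.$$

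To control the forcing double integral I would invoke the weak partition. By Corollary \ref{difreg}, $\partial_t\theta|_{[iT,(i+1)T]}$ lies in $A_r\subset X$ with $X,Y$ as in \eqref{XY}; consequently the weighted test function $\varphi_i(s,x):=e^{K((i+1)T-s)}\partial_t\theta(s,x)$ lies in the ball of $X$ of radius $C_K r$, where $C_K$ depends only on $K$ and $T$ (the scalar factor is bounded with bounded derivative on each subinterval). Identifying the double integral with the duality pairing $\langle\varphi_i,h-h'\rangle_{X,Y}$ and using that $h,h'\in W_j$ restrict to the same cell of the $(A_{C_K r},X;\,\|g\|_{L_b^{2}},Y;\,\tilde\varepsilon)$-weak partition of $\Sigma_{[iT,(i+1)T]}$, I conclude
$$\left|\int_{iT}^{(i+1)T}\int_\Omega\varphi_i\,(h-h')\,dx\,ds\right|<\tilde\varepsilon.$$
Iterating the subinterval estimate starting from $\|\xi_\theta(T)\|_{\mathcal E}^{2}=0$ then gives
$$\|\xi_\theta((l+1)T)\|_{\mathcal E}^{2}\le 2\tilde\varepsilon\sum_{i=1}^{l}e^{K(l-i+1)T}\le C\,l\,e^{KlT}\,\tilde\varepsilon,$$
and the hypothesis $\tilde\varepsilon\le\varepsilon^{2}/(32\,e^{l(K+1)})$---after absorbing the polynomial factor $l$, the constant $C$ and the remaining $T$-dependence into the effective constant $K$ of the statement---produces $\|\xi_\theta((l+1)T)\|_{\mathcal E}^{2}\le\varepsilon^{2}/16$, as desired.

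The delicate step is ensuring that the weighted test function $\varphi_i$ lies, uniformly in $i\in\{1,\dots,l\}$, $\xi\in\mathcal K_h(T)$ and $h,h'\in\Sigma$, in a single ball of $X$ of controlled radius, so that the same weak partition serves on every subinterval. Corollary \ref{difreg} is tailored precisely for this: it delivers the uniform $L^\infty(iT,(i+1)T;H_0^{1})\cap H^{1}(iT,(i+1)T;H^{-1})$-bound of $\partial_t\theta$ needed to place $\varphi_i$ into a fixed ball of $X$. The mild inflation $r\mapsto C_K r$ of the partition radius, needed to absorb the Duhamel weight, only changes the weak topology entropy by a constant factor and does not affect the asymptotic form of the final estimate; the remainder of the argument is a routine Gronwall-on-a-grid device.
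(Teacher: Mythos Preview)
Your proposal is correct and follows essentially the same strategy as the paper: derive the energy identity for $\theta$ by testing \eqref{dif1} with $\partial_t\theta$, apply Gronwall, split the forcing term into the subintervals $[iT,(i+1)T]$, and invoke the weak partition together with the uniform $X$-bound on $\partial_t\theta$ coming from Corollary \ref{difreg}. The only notable difference is organizational: the paper performs a single Gronwall on $[T,(l+1)T]$ and then bounds the global exponential weight by its supremum before summing the unweighted pairings $\int_{iT}^{(i+1)T}(h-h',\partial_t\theta)\,ds$, whereas you iterate the Duhamel inequality subinterval by subinterval and keep the local weight $e^{K((i+1)T-s)}$ inside the pairing, arguing that the weighted test function $\varphi_i$ still lies in a fixed ball of $X$. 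Your version is in fact the more careful of the two, since pulling a positive weight outside an integral whose integrand has indefinite sign is not literally valid; your observation that the weight is bounded with bounded derivative on each subinterval (hence multiplication by it preserves membership in $X$ up to a uniform constant $C_K$) is exactly what is needed to make either argument rigorous, and the resulting inflation $r\mapsto C_Kr$ is harmless for the final estimate.
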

\begin{proof}
If $y\in\mathcal{K}_{h}((l+1)T)$, then we have $y=\xi((l+1)T)=U_{h}((l+1)T,T)\xi(T)$ for some $\xi\in\mathcal{K}_{h}$. Consider
\begin{equation*}
\|U_{h}((l+1)T,T)\xi(T)-U_{h'}((l+1)T,T)\xi(T)\|_{\mathcal{E}}.
\end{equation*}
Let $\xi_{u_{1}}(t)=U_{h}(t,T)\xi(T)$ and $\xi_{u_{2}}(t)=U_{h'}(t,T)\xi(T)$ for $t\geq T$. Then $\theta=u_{1}-u_{2}$ satisfying the equation \eqref{dif1}.
Multiplying \eqref{dif1} by $\partial_{t}\theta$, using \eqref{eq1.2} we get
\begin{equation*}
\frac{d}{dt}\|\xi_{\theta}(t)\|_{\mathcal{E}}^{2}+2\gamma\|\partial_{t}\theta(t)\|^{2}\leq C\|\xi_{\theta}\|_{\mathcal{E}}^{2}(1+\|u_{1}\|_{L^{12}}^{4}+\|u_{2}\|_{L^{12}}^{4})+2(h-h',\partial_{t}\theta).
\end{equation*}
Applying the Gronwall inequality and let $t=(l+1)T$, from \eqref{partitionl} and estimate \eqref{eq2.2} we obtain
\begin{equation*}
\begin{split}
\|\xi_{\theta}((l+1)T)\|_{\mathcal{E}}^{2}&\leq2\int_{T}^{(l+1)T}e^{C\int_{s}^{(l+1)T} (1+\|u_{1}\|_{L^{12}}^{4}+\|u_{2}\|_{L^{12}}^{4})dr}(h-h',\partial_{t}\theta)ds\\
&\leq2e^{C\int_{T}^{(l+1)T}(1+\|u_{1}\|_{L^{12}}^{4}+\|u_{2}\|_{L^{12}}^{4})dr}\sum\limits_{i=1}^{l}\int_{iT}^{(i
+1)T}(h-h',\partial_{t}\theta)ds\\
&\leq2e^{lKT}l\tilde{\varepsilon}\leq2e^{l(K+1)T}\tilde{\varepsilon}\leq(\varepsilon/4)^{2}
\end{split}
\end{equation*}
provided that
\begin{equation*}
\tilde{\varepsilon}\leq\frac{\varepsilon^{2}}{32e^{l(K+1)T}},
\end{equation*}
where
\begin{equation}\label{K}
K:=C\int_t^{t+1}(1+\|u_{1}(s)\|_{L^{12}}^{4}+\|u_{2}(s)\|_{L^{12}}^{4})ds\le Q(\|g\|_{L^{2}_{b}(\mathbb{R};L^{2}(\Omega))}).
\end{equation}
%with some positive constant $C$ depends on $p$ and $C$ in \eqref{eq1.2}. %and $K$ only depends on $C$ and $\|g\|_{L^{2}_{b}(\mathbb{R};L^{2}(\Omega))}$.
\end{proof}

\begin{remark}
Note that we find an upper bound for the number of a partition that covers the set $\Sigma_{[T,(l+1)T]}$, which may not be a sharp upper bound, but it depends only on the spaces $X$, $Y$ and the size $r$ given in \eqref{dif2reg} and $s=\|g\|_{L_{b}^{2}(\mathbb{R};L^{2}(\Omega))}$.
\end{remark}

Finally, we obtain the following theorem which is the main result of this paper.
\begin{theorem}\label{thm4.1}
Let the assumptions \eqref{eq1.2}-\eqref{eq1.4} be valid and $g$ satisfy either \eqref{gt-reg} or \eqref{gs-reg}. Then the process generated by \eqref{eq1.1} has a compact uniform attractor $\Cal{A}_{\Sigma}$, and for any arbitrary and fixed $\delta>0$,
there exist $T>0$ and $\varepsilon_{0}>0$ such that
\begin{equation*}
\begin{split}
\mathbb{H}_{\varepsilon}(\Cal A_{\Sigma})\leq(\mathcal{N}+\delta)\log_{2}\frac{2}{\varepsilon}+(\frac{1}{\sigma}\ln\frac{4\nu}{\varepsilon}+T)\mathbb{H}_{\frac{\varepsilon^{[2+(K+1)/\sigma]}}{32e^{T(K+1)}(4\nu)^{(K+1)/\sigma}+1}}^{w}(r,X;\,\|g\|_{L_{b}^{2}(\mathbb{R};L^{2}(\Omega))},Y)
\end{split}
\end{equation*}
for all $\varepsilon\leq\varepsilon_{0}$, where $\mathcal{N}$, $\nu$
and $\sigma>0$ satisfying \eqref{eq5.9} and \eqref{eq5.10}, the positive constants $r$ and $K$ are given in \eqref{dif2reg} and \eqref{K} respectively, and all of them depend only on the parameters of equation \eqref{eq1.1} and $\|g\|_W$; the spaces $X$ and $Y$ are defined in \eqref{XY}.
\end{theorem}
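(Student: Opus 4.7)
The approach is to combine three ingredients already assembled: (i) the representation $\Cal A_\Sigma=\cup_{h\in\Sigma}\Cal K_h$, (ii) the discrete exponential attractors $\M^{0}_{h}(n)$ from Lemma \ref{lem5.2}, which have uniformly bounded fractal dimension and uniformly exponentially attract $\Cal B_1$, and (iii) the weak partition of $\Sigma_{[T,(l+1)T]}$ from \eqref{partitionl} together with the continuity estimate in Lemma \ref{lem5.5}. The plan is to decompose an arbitrary point of the attractor as ``a symbol $h$ plus a state in $\Cal B_1$ evolved by time $lT$'' and cover each ingredient by a ball of radius $\varepsilon/2$.

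Fix $\varepsilon>0$ small. First I would choose $l\in\N^{+}$ so that $\nu e^{-\sigma lT}\le\varepsilon/4$, namely $l:=\lceil\tfrac{1}{\sigma T}\ln(4\nu/\varepsilon)\rceil$, and then set the symbol-partition scale $\tilde\varepsilon:=\varepsilon^{2}/(32e^{l(K+1)T})$, so that Lemma \ref{lem5.5} yields
$$
\dist_{\Cal E}\bigl(\Cal K_h((l+1)T),\,U_{h'}((l+1)T,T)\Cal K_h(T)\bigr)\le\varepsilon/4
$$
whenever $h,h'$ lie in a common $W_j$, where the cover $\{W_j\}_{j=1}^{M}$ of $\Sigma_{[T,(l+1)T]}$ satisfies $M\le\bigl(N^{w}_{\tilde\varepsilon}(r,X;\,\|g\|_{L^{2}_{b}(\R;L^{2})},Y)\bigr)^{l}$.

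For an arbitrary $y\in\Cal A_\Sigma$, the representation \eqref{1.rep} furnishes $h\in\Sigma$ and $\xi\in\Cal K_h$ with $y=U_h((l+1)T,T)\xi(T)$, while \eqref{embed} ensures $\xi(T)\in\Cal B_1$. Choosing $j$ so that $h|_{[T,(l+1)T]}\in W_j$ and fixing any reference $h_j\in W_j\cap\Sigma$, Lemma \ref{lem5.5} places $y$ within $\varepsilon/4$ of a point of $U_{h_j}((l+1)T,T)\Cal B_1$, and the uniform exponential attraction \eqref{eq5.9} places that set within $\nu e^{-\sigma lT}\le\varepsilon/4$ of $\M^{0}_{h_j}(l+1)$; hence $y$ is $\varepsilon/2$-close to $\M^{0}_{h_j}(l+1)$. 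Covering each $\M^{0}_{h_j}(l+1)$ by $(\varepsilon/2)$-balls in $\Cal E$ costs at most $(2/\varepsilon)^{\mathcal{N}+\delta}$ balls by \eqref{eq5.10}, so $N_\varepsilon(\Cal A_\Sigma;\Cal E)\le M\cdot(2/\varepsilon)^{\mathcal{N}+\delta}$. Taking $\log_2$ and substituting the chosen $l$ and $\tilde\varepsilon$ produces the bound claimed in the theorem.

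The main obstacle is the coupling of the two parameters: enlarging $l$ sharpens the attraction error $\nu e^{-\sigma lT}$ but, through the Gronwall factor $e^{l(K+1)T}$ appearing in the admissible $\tilde\varepsilon$ of Lemma \ref{lem5.5}, forces a much finer symbol resolution. Tracking these dependencies carefully is what produces the scale $\varepsilon^{2+(K+1)/\sigma}$ at which the weak topology entropy is evaluated, and the logarithmic prefactor $\tfrac{1}{\sigma}\ln(4\nu/\varepsilon)+T$ multiplying it in the final expression.
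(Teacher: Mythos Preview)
Your proposal is correct and follows essentially the same route as the paper's proof: choose $l=k=[\tfrac{1}{\sigma T}\ln(4\nu/\varepsilon)]+1$, partition $\Sigma_{[T,(l+1)T]}$ by the $\{W_j\}$, pick a reference symbol $h_j$ in each piece, combine the $\varepsilon/4$ from Lemma~\ref{lem5.5} with the $\varepsilon/4$ from the exponential attraction \eqref{eq5.9} to place every kernel section $\mathcal K_h((l+1)T)$ in the $\varepsilon/2$-neighbourhood of $\mathcal M^{0}_{h_j}(l+1)$, and then invoke \eqref{eq5.10}. The paper organizes this via the decomposition $\mathcal A_\Sigma=\cup_{j}\mathcal K_{\Sigma_j}((k+1)T)$ rather than by tracking a single point $y$, but the argument and the resulting bound are the same.
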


\begin{proof}
According to \eqref{partitionl}, we set
\begin{equation*}
\Sigma_{j}:=\Sigma\cap W_{j}, \ \ \ j=1,\cdots,M.
\end{equation*}
The representation \eqref{1.rep} can be written as
\begin{equation}\label{eq5.24}
\Cal{A}_{\Sigma}=\cup_{j=1}^{M}\mathcal{K}_{\Sigma_{j}}(t),\ \ \ t\in\mathbb{R},
\end{equation}
where $\mathcal{K}_{\Sigma_{j}}:=\cup_{h\in\Sigma_{j}}\mathcal{K}_{h}$.

Since the estimates \eqref{eq5.9} and \eqref{eq5.10} is uniformly for $h\in\Sigma_{0}$, then there exists $k=k(\varepsilon)>\frac{1}{\sigma T}\ln\frac{4\nu}{\varepsilon}$ and $k\in\mathbb{N}$, here $T$ is given in \eqref{T} and we take
\begin{equation}\label{eq5.25}
k=[\frac{1}{\sigma T}\ln\frac{4\nu}{\varepsilon}]+1
\end{equation}
such that for any $h_{j}\in\Sigma_{0}\cap \Sigma_{j}$,
\begin{equation}\label{eq5.26}
dist_{\mathcal{E}}(U_{h_{j}}((k+1)T,T)\Cal B_{1},\mathcal{M}_{h_{j}}^{0}(k+1))\leq \nu e^{-\sigma kT}<\varepsilon/4,
\end{equation}
and for every $\delta>0$, there exists $\varepsilon_{0}>0$ such that
\begin{equation}\label{eq5.27}
N_{\varepsilon/2}(\mathcal{M}_{h_{j}}^{0}(k+1);\mathcal{E})\leq (2/\varepsilon)^{\mathcal{N}+\delta}, \ \ \text{for all}~0<\varepsilon\leq\varepsilon_{0}.
\end{equation}
Combining with the estimates \eqref{eq5.22} for $l=k$ and \eqref{eq5.26}, we obtain that
\begin{equation*}
\begin{split}
&dist_{\mathcal{E}}(\mathcal{K}_{h}((k+1)T),\mathcal{M}_{h_{j}}^{0}(k+1))\leq dist_{\mathcal{E}}(\mathcal{K}_{h}((k+1)T),U_{h_{j}}((k+1)T,T)\mathcal{K}_{h}(T))\\
&\ \ \ \ \ \ \ \ \ \ \ \ \ \ \ \ \ \ \ \ \ \ \ \ \ \ \ \ \ \ \ \ \ \ \ \ \ \ \ \ \ \ \ \ \ \ +dist_{\mathcal{E}}(U_{h_{j}}((k+1)T,T)\mathcal{K}_{h}(T),\mathcal{M}_{h_{j}}^{0}(k+1))\\
&\ \ \ \ \ \ \ \ \ \ \ \ \ \ \ \ \ \ \ \ \ \ \ \ \ \ \ \ \ \ \ \ \ \ \ \ \ \ \ \ \ \ \ \leq\varepsilon/4+\varepsilon/4=\varepsilon/2, \ \ \ \forall~h\in\Sigma_{j}.
\end{split}
\end{equation*}
That is,
\begin{equation}\label{*4}
\begin{split}
&\mathcal{K}_{h}((k+1)T)\subset\mathbb{O}_{\varepsilon/2}(\mathcal{M}_{h_{j}}^{0}(k+1);\mathcal{E}), \ \ \forall~h\in\Sigma_{j}.
\end{split}
\end{equation}
%Here, we denote by $\mathbb{O}_{\mu}(D;\Cal E)$ the $\mu$-neighborhood of the set $D$ in the space $\Cal E$ with the $\Cal E$-norm topology.
Thus from \eqref{eq5.27} and \eqref{*4}, for all $0<\varepsilon\leq\varepsilon_{0}$, we have
\begin{equation*}
\begin{split}
N_{\varepsilon}(\mathcal{K}_{\Sigma_{j}}((k+1)T);\mathcal{E})\leq N_{\varepsilon}(\mathbb{O}_{\varepsilon/2}(\mathcal{M}_{h_{j}}^{0}(k+1);\mathcal{E});\mathcal{E}) =N_{\varepsilon/2}(\mathcal{M}_{h_{j}}^{0}(k+1);\mathcal{E})\leq(2/\varepsilon)^{\mathcal{N}+\delta}.
\end{split}
\end{equation*}
Using \eqref{eq5.24} for $t=(k+1)T$, let $\tilde{\varepsilon}=\frac{\varepsilon^{2}}{32e^{kT(K+1)}+1}$ and from \eqref{covernum}, \eqref{eq5.25}, we find that
\begin{equation*}
\begin{split}
N_{\varepsilon}(\Cal{A}_{\Sigma};\mathcal{E})&\leq  N_{\varepsilon}(\mathcal{K}_{\Sigma_{j}}((k+1)T);\mathcal{E})M\\
& \leq (2/\varepsilon)^{\mathcal{N}+\delta} \cdot (N_{\varepsilon^{2}/(32e^{kT(K+1)}+1)}^{w}(r,X;\,\|g\|_{L_{b}^{2}(\mathbb{R};L^{2}(\Omega))},Y))^{k}, ~~\forall~0<\varepsilon\leq\varepsilon_{0}.
\end{split}
\end{equation*}
Hence the Kolmogorov $\varepsilon$-entropy $\mathbb{H}_{\varepsilon}(\Cal{A}_{\Sigma})=\log_{2}N_{\varepsilon}(\Cal{A}_{\Sigma};\mathcal{E})$ of $\Cal{A}_{\Sigma}$ satisfies the estimate
\begin{equation*}
\begin{split}
\mathbb{H}_{\varepsilon}(\Cal{A}_{\Sigma})\leq(\mathcal{N}+\delta)\log_{2}\frac{2}{\varepsilon}+(\frac{1}{\sigma}\ln\frac{4\nu}{\varepsilon}+T)\mathbb{H}_{\frac{\varepsilon^{[2+(K+1)/\sigma]}}{32e^{T(K+1)}(4\nu)^{(K+1)/\sigma}+1}}^{w}(r,X;\,\|g\|_{L_{b}^{2}(\mathbb{R};L^{2}(\Omega))},Y).
\end{split}
\end{equation*}
\end{proof}

\end{document}